\documentclass[a4paper,12pt]{amsart}

\usepackage{hyperref}
\usepackage{url}
\usepackage{fullpage}
\usepackage{amssymb}
\usepackage{latexsym} 
\usepackage{amsfonts} 
\usepackage{amsmath}
\usepackage{eucal} 
\usepackage{bm} 
\usepackage{bbm} 
\usepackage{graphicx} 
\usepackage[english]{varioref} 
\usepackage[nice]{nicefrac} 
\usepackage[all]{xy}  
\usepackage{amsthm}

\newcommand{\Adm}{{\rm Adm}}

\newcommand{\Lie}{\text {\rm Lie}}

\def\Xo{\mathring{X}}
\def\Pio{\mathring{\Pi}}

\def\Om{\Omega}

\def\la{\lambda}
\def\tG{{\tilde G}}
\def\GG{{\mathcal G}}
\def\B{{\mathcal B}}
\def\O{{\bf O}}
\def\K{{\bf K}}
\def\st{{\star}}

\def\ge{\geqslant}
\def\le{\leqslant}
\def\a{\alpha}
\def\b{\beta}

\def\d{\delta}
\def\D{\Delta}

\def\t{\tau}
\def\th{\theta}

\def\l{\lambda}

\def\i{^{-1}}

\def\cf{\mathcal F}

\def\co{\mathcal O}
\def\cp{\mathcal P}

\def \trir {\triangleright}
\def \tril {\triangleleft}

\def\tS{\tilde S}

\def\etW{\widehat{W}}

\def\<{\langle} 
\def\>{\rangle}

\def\pt{{\rm pt}}

\def\Fl{\widetilde{Fl}}

\def\Gr{{\rm Gr}}
\def\Z{{\mathbb Z}}
\def\C{{\mathbb C}}
\def\R{{\mathbb R}}

\theoremstyle{plain}
\newtheorem{thm}{Theorem}[section] 
\newtheorem*{thm*}{Theorem} 
 \newtheorem{prop}[thm]{Proposition}
 \newtheorem{lem}[thm]{Lemma}
 \newtheorem{cor}[thm]{Corollary}

\theoremstyle{definition}

\theoremstyle{remark}

\newtheorem*{rmk*}{Remark}
\newtheorem*{claim*}{Claim}

\begin{document}

\author{Xuhua He}\address
 {Department of Mathematics\\ Hong Kong University of Science and Technology\\ Clear Water Bay\\ Kowloon, Hong Kong}
 \curraddr{Department of Mathematics\\ University of Maryland\\College Park\\MD 20742 USA.}
 \email{xuhuahe@math.umd.edu}
 \urladdr{http://www.math.umd.edu/~xuhuahe}
\author{Thomas Lam}\address
 {Department of Mathematics\\ University of Michigan\\ Ann Arbor\\ MI 48109 USA.}
 \date{\today}
 \email{tfylam@umich.edu}
 \urladdr{http://www.math.lsa.umich.edu/\~{ }tfylam}
\thanks{X.H. was partially supported by HKRGC grant 601409 and 602011.}
\thanks{T.L. was partially supported by NSF grants DMS-0968696 and DMS-1160726, and by a Sloan Fellowship.}
\title[]{Projected Richardson varieties and affine Schubert varieties}
\keywords{flag variety, Schubert calculus, projected Richardson variety, affine Schubert variety}
\subjclass[2010]{14M15, 05E10}

\maketitle
\begin{abstract}
Let $G$ be a complex quasi-simple algebraic group and $G/P$ be a partial flag variety.  The projections of Richardson varieties from the full flag variety form a stratification of $G/P$.  We show that the closure partial order of projected Richardson varieties agrees with that of a subset of Schubert varieties in the affine flag variety of $G$.  Furthermore, we compare the torus-equivariant cohomology and $K$-theory classes of these two stratifications by pushing or pulling these classes to the affine Grassmannian.
Our work generalizes results of Knutson, Lam, and Speyer for the Grassmannian of  type $A$.
\end{abstract}

\section{Introduction}
Let $G$ be a complex quasi-simple algebraic group, $B, B^- \subset G$ be opposite Borel subgroups, and $T=B \cap B^-$ the maximal torus.  The flag variety $G/B$ has a stratification $G/B = \sqcup \Xo_w = \sqcup \Xo^{w}$ by Schubert cells $\Xo_w=B^- w B/B$ and opposite Schubert cells $\Xo^{w }=B w B/B$.  The intersections $\Xo_w^{v} = \Xo_w \cap \Xo^{v}$ are known as open Richardson varieties, and also form a stratification of $G/B$.  The closure of $\Xo^{v}_w$ is the (closed) Richardson variety $X^v_w=X^v \cap X_w$, where $X_w=\overline{B^- w B/B}$ is a Schubert variety and $X^v=\overline{B v B/B}$ is an opposite Schubert variety. 

Let $P \subset B$ be a fixed parabolic subgroup, and $\pi: G/B \to G/P$ denote the projection.  The \emph{open projected Richardson varieties} $\Pio_w^{v} = \pi(\Xo_w^{v})$ form a stratification of $G/P$ (for suitable $w$ and $v$).  Its closure $\Pi_w^v=\pi(X_w^v)$ is called a \emph{projected Richardson variety} and was studied by Lusztig \cite{L} and Rietsch \cite{R} in the context of total positivity, and by Goodearl and Yakimov \cite{GY} in the context of Poisson geometry.  Projected Richardson varieties enjoy many desirable geometric properties: Knutson, Lam, and Speyer \cite{KLS2} (see also Billey and Coskun \cite{BC}) showed that they are Cohen-Macaulay, normal, have rational singularities and are exactly the compatibly Frobenius split subvarieties of $G/P$ with respect to the standard splitting.

Our main results are combinatorial, cohomological, and $K$-theoretic comparisons between the projected Richardson varieties $\Pi_w^v$ and the affine Schubert varieties of the affine flag variety $\Fl$ of $G$.  These results generalize work of Knutson, Lam, and Speyer \cite{KLS} in the case that $G/P$ is a Grassmannian of type $A$.  The techniques of our proof differ significantly from those of \cite{KLS}.  In particular, the proof of the cohomological part of \cite{KLS} appears to only extend to cominuscule $G/P$.  A more geometric comparison in the Grassmannian case was performed by Snider \cite{S} who also recovered our $K$-theoretic comparison in the case of the Grassmannian.

On the combinatorial side, we compare two posets. One is obtained from the closure order of projected Richardson varieties, which we denote by $Q_J$. It was first studied by Rietsch \cite{R} and Goodearl and Yakimov \cite{GY}. The other one is the \emph{admissible subset} $\Adm(\l)$ associated to a dominant coweight $\l$, introduced by Kottwitz and Rapoport in \cite{KR}. It is a subset of the Iwahori-Weyl group $\etW$ and the Bruhat order on $\etW$ gives a partial order on $\Adm(\l)$. One important result in the study of Shimura varieties is that the special fiber of the local model is a union of finitely many opposite affine Schubert cells $I w I/I$ in the affine flag variety, where $w$ runs over the admissible set $\Adm(\l)$ for the Shimura coweight $\l$.  See \cite{PR} and \cite{Zhu}.

In this paper, we define an injection $\theta: Q_J \to \Adm(\l)$.  Our combinatorial theorem states that $\theta$ is order-reversing and the image is the $W \times W$-double coset of the translation element $t^{-\l}$. In the special case where $\l$ is a minuscule coweight, $\theta$ gives an order-reversing bijection between $Q_J$ and $\Adm(\l)$. The proof relies on properties of the \emph{Demazure}, or \emph{monoidal} product of Coxeter groups, studied for example by He, and He and Lu in \cite{H1,H2,HL}.  In Section \ref{s:applications}, we describe some applications of our result to the combinatorial properties of the poset $Q_J$, and to the enumeration of $\Adm(\l)$ for minuscule coweights $\l$. We also give a closed formula for the number of rational points of the special fiber of local model for ``fake'' unitary Shimura varieties. The order-reversing bijection between $Q_J$ and $\Adm(\l)$ also plays an important role in the proof of the normality and Cohen-Macaulayness of local models \cite{H13}. 

In fact, our combinatorial theorem naturally extends to the comparison of a larger partial order on $W \times W^J \supset Q^J$ with a $W \times W$-double coset in $\etW$.  This partial order on $W \times W^J$ arises as the closure partial order of a stratified space $Z_J$, studied by Lusztig \cite{L1}.  In Section \ref{ss:strata} we give maps between these stratified spaces which in part explains the combinatorial theorems.

Now let $\Gr = G(\K)/G(\O)$ denote the the affine Grassmannian of $G$, where $\K = \C((t))$ and $\O= \C[[t]]$.  Let $\Gr_\l \subset \Gr$ denote the closure of the $G(\O)$-orbit containing the torus-fixed point $t^{-\l}$ labeled by $-\l$. The dense open orbit $G(\O)t^{-\l}G(\O)/G(\O)$ is an affine bundle over $G/P$ and we let $p: G/P \hookrightarrow \Gr_\l$ denote the composition of the zero section with the open inclusion $G(\O)t^{-\l}G(\O)/G(\O) \subset \Gr_\l$.  Our $K$-theoretic theorem states that in $K_T(\Gr_\l)$ we have
\begin{equation}\label{E:cohom}
p_*([\co_{\Pi^x_y}]) = q^*(\psi^{\theta(x,y)})
\end{equation}
where $[\co_{\Pi^x_y}] \in K_T(G/P)$ and $\psi^{\theta(x,y)} \in K_T(\Fl)$ denote the $K$-theory classes of the structure sheaves of projected Richardson varieties and affine Schubert varieties respectively, and $q^*$ is induced by the composition of the inclusion $\Gr_\l \to \Gr$, with the maps $\Gr \simeq \Omega K \to LK \to LK/T_\R \simeq \Fl$.  Here $K \subset G$ denotes the maximal compact subgroup, $T_\R \subset T$ the compact torus, and $LK$ and $\Omega K$ are the free loop group and based loop group.  The same formula \eqref{E:cohom} holds for torus-equivariant cohomology classes.  The proof of the $K$-theoretic comparisons (Section \ref{sec:K}) relies on the study of equivariant localizations.  We utilize the machinery developed by Kostant and Kumar \cite{KKK} where equivariant localizations of Schubert classes in both finite and infinite-dimensional flag varieties are studied.

In Section \ref{ssec:Symm}, we use our $K$-theory comparison to prove a conjecture of Knutson, Lam, and Speyer \cite{KLS} stating that the affine stable Grothendieck polynomials of \cite{Lam,LSS} represent the classes of the structure sheaves of positroid varieties in the $K$-theory of the Grassmannian.  In Section \ref{ssec:QK}, we explain the implications, in the case of a cominuscule $G/P$, towards the comparison of the quantum $K$-theory of $G/P$ and the $K$-homology ring of the affine Grassmannian.

\medskip
{\bf Acknowledgements.}  The authors would like to thank Allen Knutson, Ulrich G\"ortz, Jiang-Hua Lu, Leonardo Mihalcea, David Speyer, and John Stembridge for related discussions. We also would like to thank the referee for careful reading and valuable suggestions. 

\section{Combinatorial comparison}\label{sec:combin}

%

\subsection{} Let $G$ be a complex connected quasi-simple algebraic group. Let $B, B^-$ be opposite Borel subgroup of $G$ and $T=B \cap B^-$ be a maximal torus. Let $Q$ be the coroot lattice and $P$ be the coweight lattice of $G$. We denote by $P^+$ the set of dominant coweights and $Q^+=Q \cap P^+$. Let $(\a_i)_{i \in S}$ be the set of simple roots determined by $(B, T)$. Let $R$ (resp. $R^+$, $R^-$) be the set of roots (resp. positive roots, negative roots). We denote by $W$ the Weyl group $N(T)/T$. For $i \in S$, we denote by $s_i$ the simple reflection corresponding to $i$.  For $\alpha \in R$, we let $r_\alpha$ denote the corresponding reflection.


Let $W_a=Q \rtimes W$ be the affine Weyl group and $\etW=P \rtimes W$ be the Iwahori-Weyl group (sometimes also called the extended affine Weyl group). It is known that $W_a$ is a normal subgroup of $\etW$ and is a Coxeter group with generators $s_i$ (for $i \in \tS=S \cup\{0\}$). Here $s_i$ (for $i \in S$) generates $W$ and $s_0=t^{-\th^\vee} s_\th$ is a simple affine reflection, where $\th$ is the largest positive root of $G$.  We emphasize that $\etW$, which serves as the key indexing set in the sequel, depends on $G$ and not just $R$.

Following \cite{IM65}, we define the length function on $\etW$ by 
\[\tag{a} \ell(t^\chi w)=\sum_{\a \in R^+, w \i(\a) \in R^+} |\<\chi, \a\>|+\sum_{\a \in R^+, w \i(\a) \in R^-} |\<\chi, \a\>+1|.
\] 

For any proper subset $J$ of $\tS$, let $W_J$ be the subgroup generated by $s_i$ for $i \in J$ and $w_J$ be the maximal element in $W_J$. We denote by $\etW^J$ (resp. ${}^J \etW$) the set of minimal length representatives in $\etW/W_J$ (resp. $W_J\backslash \etW$). For $J, K \subset \tS$, we simply write $\etW^J \cap {}^K \etW$ as ${}^K \etW^J$. If moreover, $J, K \subset S$, then we write $W^J$ for $W \cap \etW^J$, ${}^K W$ for $W \cap {}^K \etW^J$ and ${}^K W^J$ for $W \cap {}^K \etW^J$. For any $w \in W$, the coset $W_J w$ contains a unique minimal and a unique maximal element. We denote by $\min(W_J w)$ and $\max(W_J w)$ respectively. The elements $\min(w W_J)$ and $\max(w W_J)$ are defined in a similar way. 
%


Let $\Om$ be the subgroup of length-zero elements of $\etW$.  The Bruhat order on $W_a$ extends naturally to $\etW$. Namely, for $w_1, w_2 \in W_a$ and $\t_1, \t_2 \in \Om$, we define $\t_1w_1  \le \t_2 w_2$ if and only if $\t_1=\t_2$ and $w_1 \le w_2$ in $W_a$. 

\subsection{}\label{tr} Now we introduce three operations $\ast: \etW \times \etW \to \etW$, $\trir: \etW \times \etW \to \etW$ and $\tril: \etW \times \etW \to \etW$. Here $\ast$ is the \emph{Demazure}, or \emph{monoidal}, product and following \cite{KLS2} we call $\trir$ and $\tril$ the \emph{downwards Demazure products}. They were also considered in \cite{HL} and \cite{H2} and some properties were also discussed there. 

We describe $x \ast y$, $x \trir y$ and $x \tril y$ for $x, y \in \etW$ as follows. See \cite[Lemma 1.4]{H1}. 

(1) The subset $\{u v; u \le x, v \le y\}$ contains a unique maximal element, which we denote by $x \ast y$. Moreover, $x \ast y=u' y=x v'$ for some $u' \le x$ and $v' \le y$ and $\ell(x \ast y)=\ell(u')+\ell(y)=\ell(x)+\ell(v')$. 

(2) The subset $\{u y; u \le x\}$ contains a unique minimal element which we denote by $x \trir y$. Moreover, $x \trir y=u'' y$ for some $u'' \le x$ with $\ell(x \trir y)=\ell(y)-\ell(u'')$. 

(3) The subset $\{x v; v \le y\}$ contains a unique minimal element which we denote by $x \tril y$. Moreover, $x \tril y=x v''$ for some $v'' \le y$ with $\ell(x \tril y)=\ell(x)-\ell(v'')$. 

Now we list some properties of $\ast$, $\trir$ and $\tril$. 

(4) If $x' \le x$ and $y' \le y$, then $x' \ast y' \le x \ast y$. See \cite[Corollary 1]{H2}.

(5) If $x' \le x$, then $x' \tril y \le x \tril y$. See \cite[Lemma 2]{H2}. 

(6) $z \le x \ast y$ if and only if $z \tril y \i \le x$ if and only if $x \i \trir z \le y$. See \cite[Appendix]{HL}.

(7) If $J$ is a proper subset of $\tS$, then $\min(W_J x)=w_J \trir x$, $\min(x W_J)=x \tril w_J$, $\max(W_J x)=w_J \ast x$ and $\max(x W_J)=x \ast w_J$. 


\subsection{} 
In the rest of this section, we fix a dominant coweight $\l$. Set $J=\{i \in S; \<\l, \a_i\>=0\}$. Any element in $W t^{-\l} W \subset \etW$ can be written in a unique way as $y t^{-\l} x \i$ for $x \in W^J$ and $y \in W$. In this case, $\ell(y t^{-\l} x \i)=\ell(t^{-\l})+\ell(y)-\ell(x)$. The maximal element in $W t^{-\l} W$ is $w_S t^{-\l}$ and the minimal element is $t^{-\l} w_J w_S$. 

\begin{prop}\label{tl}
Let $x, x' \in W^J$ and $y, y' \in W$. Then the following conditions are equivalent:

(1) $y' t^{-\l} (x') \i \le y t^{-\l} x \i$;

(2) There exists $u \in W_J$ such that $y' u \le y$ and $x u \i \le x'$;

(3) There exists $v \in W_J$ such that $y' \le y v$ and $x v \le x'$. 
\end{prop}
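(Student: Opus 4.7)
My strategy is to reduce the three-way equivalence to a single base case, which I then translate into conditions (2) and (3) using the Demazure product machinery of Section~2.

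First, using formula~(a) and the observation that any $\alpha \in R^+$ with $x(\alpha) < 0$ for $x \in W^J$ lies outside $R_J$ and hence satisfies $\<\l,\alpha\> \ge 1$, I verify $\ell(y t^{-\l} x^{-1}) = \ell(y) + \ell(t^{-\l}) - \ell(x)$ for $y \in W$ and $x \in W^J$; in particular $y t^{-\l} x^{-1} = y \ast (t^{-\l} x^{-1})$ is a Demazure product. Applying property~(6) to this factorization,
\[
y' t^{-\l} (x')^{-1} \le y t^{-\l} x^{-1} \iff y^{-1} \trir \bigl(y' t^{-\l} (x')^{-1}\bigr) \le t^{-\l} x^{-1},
\]
and a short computation using property~(2) shows $y^{-1} \trir (y' t^{-\l} (x')^{-1}) = (y^{-1} \trir y') \cdot t^{-\l} (x')^{-1}$. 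Setting $b := y^{-1} \trir y' \in W$, condition~(1) becomes the base inequality $b \, t^{-\l} (x')^{-1} \le t^{-\l} x^{-1}$.

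Next, I translate (2) and (3) to a single base condition: $b \in W_J$ and $xb \le x'$. For (2), given $u \in W_J$ witnessing it, the element $u^{-1} = (y'u)^{-1} y'$ lies in $\{u'y' : u' \le y^{-1}\}$, so $b \le u^{-1} \in W_J$, hence $b \in W_J$ and $xb \le xu^{-1} \le x'$ by property~(4); conversely, $u = b^{-1}$ works, using $y'u = u_0^{-1}$ where $b = u_0 y'$ is the factorization in property~(2). For (3), given $v \in W_J$, the inequality $y' \le yv \le y \ast v$ and property~(6) yield $b \le v \in W_J$; conversely, setting $v = b$ one needs $y' \le yb$. For this I argue that $yb$ is length-additive: if some $s \in S$ were simultaneously a right descent of $y$ (so $s \le y^{-1}$) and a left descent of $b = u_0 y'$, then Bruhat lifting applied to $u_0 \le y^{-1}$ gives $s u_0 \le y^{-1}$, making $sb = (s u_0) y'$ a strictly shorter element of $\{u' y' : u' \le y^{-1}\}$ and contradicting minimality of $b$. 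Thus $yb = y \ast b$, and property~(6) gives $y' \le y \ast b = yb$.

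The remaining task is the base case itself: $b \, t^{-\l} (x')^{-1} \le t^{-\l} x^{-1}$ if and only if $b \in W_J$ and $xb \le x'$. The backward direction follows from $b \in W_J$ commuting with $t^{-\l}$ plus monotonicity of $\ast$ applied to $xb \le x'$, via reduced expressions for $t^{-\l}$ aligned with the factorizations $t^{-\l} = (t^{-\l}(x')^{-1}) \ast x'$ and $t^{-\l} = (t^{-\l} x^{-1}) \ast x$. The forward direction is the main obstacle: since $t^{-\l} x^{-1}$ lies in ${}^J \etW$ (any $s_j$ with $j \in J$ lengthens it), Bruhat projection yields $({}^J b) \, t^{-\l} (x')^{-1} \le t^{-\l} x^{-1}$, and an induction on $\ell({}^J b)$ using affine Bruhat lifting at a left descent of ${}^J b$ in $S \setminus J$ should force ${}^J b = e$; once $b \in W_J$, $xb \le x'$ is extracted from the resulting inequality by analyzing the right factor in $W$. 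This hard direction is the technical heart of the proof.
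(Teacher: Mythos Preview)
Your reduction of (1) to the inequality $b\,t^{-\l}(x')^{-1} \le t^{-\l}x^{-1}$ with $b = y^{-1} \trir y'$ is correct and coincides with the paper's opening move in (1)$\Rightarrow$(2): the paper's $z^{-1}y'$ is exactly your $b$. Your proofs that (2) and (3) are each equivalent to the base condition ``$b \in W_J$ and $xb \le x'$'' are also correct; the length-additivity argument for $yb$ is a nice touch. This yields (2)$\Leftrightarrow$(3) by a route different from the paper's, which instead passes directly between (2) and (3) using properties~(4) and~(5).

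The genuine gap is precisely where you place it: the forward implication of your base case. The lifting induction you sketch does not go through. With $u = ({}^Jb)\,t^{-\l}(x')^{-1}$ and $w = t^{-\l}x^{-1}$ one has $su < u \le w$ and $sw > w$ for any left descent $s \in S \setminus J$ of ${}^Jb$; this configuration yields no useful conclusion from the lifting property, and $s\cdot{}^Jb$ need not remain in ${}^JW$, so there is no evident descent step for an induction. The paper handles this by projecting on the \emph{right} rather than the left: from $b\,t^{-\l}(x')^{-1} \le t^{-\l}x^{-1}$ one takes $\max(\,\cdot\,W)$ (monotone by~(4) and~(7)) to obtain $\max(bW_J)\,t^{-\l} \le w_J t^{-\l}$, hence $\max(bW_J) \le w_J$ and therefore $b \in W_J$. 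The second half, extracting $xb \le x'$ (your ``analyzing the right factor in $W$''), is carried out via the length-additive factorization $t^{-\l}x^{-1} = (t^{-\l}w_Jw_S)(w_Sw_Jx^{-1})$: strip the common left factor $t^{-\l}w_Jw_S$, multiply by $w_S$ to reverse Bruhat order, and apply property~(5) for $\tril$. The backward direction of your base case is made precise by the same factorization.
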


\begin{proof} (1) $\Rightarrow$ (2): Since $\ell(y t^{-\l} x \i)=\ell(t^{-\l} x \i)+\ell(y)$, we have that $y t^{-\l} x \i=y \ast t^{-\l} x \i$. By \ref{tr} (6),  $y \i \trir (y' t^{-\l} (x') \i) \le t^{-\l} x \i$. In other words, there exists $z \le y$ such that $z \i y' t^{-\l} (x') \i \le t^{-\l} x \i$. Now we have that $$\max(z \i y' W_J) t^{-\l}=\max(z \i y' t^{-\l} (x') \i W) \le \max (t^{-\l} x \i W)=w_J t^{-\l}.$$

Therefore $\max(z \i y' W_J) \le w_J$ and $z \i y' \in W_J$. We denote $(y') \i z$ by $u$. Then $u \in W_J$, $y' u=z \le y$ and $u \i t^{-\l} (x') \i \le t^{-\l} x \i$. We have that 
\[u \i t^{-\l} (x') \i=(t^{-\l} w_J w_S) (w_S w_J u \i (x') \i) \quad \text{ and } \quad t^{-\l} x \i=(t^{-\l} w_J w_S) (w_S w_J x \i).
\]

Moreover, 
\begin{align*}
& \ell(u \i t^{-\l} (x') \i)=\ell(t^{-\l})+\ell(u)-\ell(x')  \\
&=\ell(t^{-\l} w_J w_S)+\ell(w_S w_J)+\ell(u)-\ell(x') \\ 
&=\ell(t^{-\l} w_J w_S)+\ell(w_S)-\ell(w_J)+\ell(u)-\ell(x') \\
&=\ell(t^{-\l} w_J w_S)+\ell(w_S)-\ell(w_J u \i)-\ell(x') \\
&=\ell(t^{-\l} w_J w_S)+\ell(w_S)-\ell(w_J u \i (x') \i) \\
&=\ell(t^{-\l} w_J w_S)+\ell(w_S w_J u \i (x') \i).
\end{align*}

Similarly, $\ell(t^{-\l} x\i)=\ell(t^{-\l} w_J w_S)+\ell(w_S w_J x \i)$.

From $u \i t^{-\l} (x') \i \le t^{-\l} x \i$ we deduce that $w_S w_J u \i (x') \i \le w_S w_J x \i$. Hence $w_J u \i (x') \i \ge w_J x \i$ and $x w_J \le x' u w_J$. By \ref{tr} (5), $$x u \i=(x w_J) \tril (w_J u \i) \le (x' u w_J) \tril (w_J u \i) \le x'.$$

(2) $\Rightarrow$ (1): We have that $y t^{-\l} x \i=y (t^{-\l} w_J w_S) (w_S w_J x \i)$ and $\ell(y t^{-\l} x\i)=\ell(y)+\ell(t^{-\l} w_J w_S)+\ell(w_S w_J x \i)$

Since $x u \i \le x'$, we have that $$x w_J=(x u \i) (u w_J) \le x' \ast (u w_J)=x' u w_J.$$ Thus $w_S w_J x \i \ge w_S w_J u \i (x') \i$. Also we have that $y' u \le y$. Therefore 
\begin{align*} 
y'  t^{-\l} (x') \i &=(y' u) (t^{-\l} w_J w_S) (w_S w_J u \i (x') \i) \le y (t^{-\l} w_J w_S) (w_S w_J x \i) \\
&=y t^{-\l} x \i.
\end{align*}

(2) $\Rightarrow$ (3): Since $y' u \le y$, by \ref{tr} (4) $y' \le y' u \ast u \i \le y \ast u \i$. In other words, there exists $v \le u \i$ such that $y' \le y v$. Notice that $u \in W_J$. Hence $v \in W_J$. Since $x \in W^J$, we also have that $x v \le x u \i \le x'$. 

(3) $\Rightarrow$ (2): Since $y' \le y v$, by \ref{tr} (5) $y' \tril v \i \le y v \tril v \i \le y$. In other words, there exists $u \le v \i$ such that $y' u \le y$. Notice that $v \in W_J$. Hence $u \in W_J$. Since $x \in W^J$, we also have that $x u \i \le x v \le x'$. 
\end{proof}

\subsection{} Define the partial order $\preceq$ on $W^J \times W$ as follows: 

$(x', y') \preceq (x, y)$ if and only if there exists $u \in W_J$ such that $x' u \le x$ and $y' u \ge y$. 

Define $Q_J=\{(x, y) \in W^J \times W; y \le x\}$. Then $(Q_J, \preceq)$ is a subposet of $(W^J \times W, \preceq)$.  We shall show in Appendix that $Q_J$ is the same poset as the one studied in \cite{R,GY}.

Following \cite{KR}, we introduce the admissible set as
$$\Adm(-w_S \l)=\{z \in \etW; z \le t^{-w \l} \text{ for some } w \in W\}.$$ Here $-w_S \l$ is the unique dominant coweight in the $W$-orbit of $-\l$. 

Now we have the following result. 

\begin{thm}\label{T:combin}\

(1) The map $$W^J \times W \to W t^{-\l} W, \qquad (x, y) \mapsto y t^{-\l} x \i$$ gives an order-preserving, graded, bijection between the poset $(W^J \times W, \preceq)$ and the poset $(W t^{-\l} W, \le^{op})$. Here $\le^{op}$ is the opposite Bruhat order on the Iwahori-Weyl group $\etW$. 

(2) Its restriction to $Q_J$ gives an order-preserving, graded, bijection between the posets $(Q_J, \preceq)$ and $(W t^{-\l} W \cap \Adm(-w_S \l), \le^{op})$. 
\end{thm}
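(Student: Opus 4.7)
For Part~(1), the plan is to deduce the statement almost directly from Proposition~\ref{tl}. Bijectivity of the map $\phi\colon (x,y) \mapsto y t^{-\l} x^{-1}$ is immediate from the unique factorization stated just before that proposition. I would then unwind the definition: the condition $(x, y) \preceq (x', y')$ asserts the existence of some $u \in W_J$ with $xu \le x'$ and $yu \ge y'$, which, upon renaming $v = u$, is precisely condition~(3) of Proposition~\ref{tl} applied to the inequality $y' t^{-\l}(x')^{-1} \le y t^{-\l} x^{-1}$. By the iff of Proposition~\ref{tl}, $\phi$ is therefore an order isomorphism onto $(W t^{-\l} W, \le^{op})$. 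For gradedness I would use the length formula $\ell(y t^{-\l} x^{-1}) = \ell(t^{-\l}) + \ell(y) - \ell(x)$ recalled earlier: pulling back the standard rank $\ell(w_S t^{-\l}) - \ell(\cdot)$ on the opposite Bruhat poset yields $\rho(x,y) = \ell(w_S) + \ell(x) - \ell(y)$, which is automatically a valid rank function on the source because $\phi$ is an order isomorphism and hence preserves covers.

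For Part~(2), the crux is to identify the translation elements lying in the double coset $W t^{-\l} W$. Since $W_J$ is exactly the $W$-stabilizer of $\l$, writing $w = x' u$ with $x' \in W^J$ and $u \in W_J$ gives $t^{-w\l} = x' t^{-\l}(x')^{-1}$; thus the translations inside $W t^{-\l} W$ are precisely $\{x' t^{-\l}(x')^{-1} : x' \in W^J\}$. Combined with $W \cdot (-w_S\l) = W \cdot (-\l)$, this rewrites
\[
W t^{-\l} W \cap \Adm(-w_S\l) = \bigl\{y t^{-\l} x^{-1} : \exists\, x' \in W^J \text{ with } y t^{-\l} x^{-1} \le x' t^{-\l}(x')^{-1}\bigr\}.
\]
Applying Proposition~\ref{tl}(1)$\Leftrightarrow$(3) to the inequality on the right reduces admissibility to the existence of $x' \in W^J$ and $v \in W_J$ with $y \le x'v$ and $x'v \le x$. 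Such $x',v$ clearly exist iff $y \le x$: the forward direction gives $y \le x'v \le x$ directly, and for the converse one takes $x' = x$ and $v = e$. Hence $\phi$ restricts to a bijection $Q_J \to W t^{-\l} W \cap \Adm(-w_S\l)$, which inherits the order-preserving graded property from Part~(1).

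The main obstacle is not really combinatorial: Proposition~\ref{tl} already packages the key technical difficulty. The only genuinely new ingredients are the identification of the translations in $W t^{-\l} W$ with the conjugates indexed by $W^J$, and the careful four-variable substitution into Proposition~\ref{tl} that collapses the admissibility condition to $y \le x$. I expect no further serious obstacles beyond tracking the renamings cleanly.
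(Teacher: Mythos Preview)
Your proposal is correct and follows essentially the same approach as the paper: Part~(1) is exactly the paper's ``reformulation of Proposition~\ref{tl}'' (you invoke condition~(3), which matches the definition of $\preceq$ on the nose), and for Part~(2) the paper likewise reduces admissibility via Proposition~\ref{tl} to the existence of $u\in W_J$ with $y\le wu\le x$, then collapses this to $y\le x$ using the same witnesses $x'=x$, $v=e$ that you chose. Your treatment is slightly more explicit about gradedness and about why the translations in $Wt^{-\l}W$ are exactly the $x't^{-\l}(x')^{-1}$ with $x'\in W^J$, but there is no substantive difference in strategy.
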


\begin{proof} (1) is just a reformulation of the Proposition \ref{tl}. Now we prove (2).

If $(x, y) \in Q_J$, then $y \le x$. Hence $y t^{-\l} x \i \le x t^{-\l} x\i=t^{-x \l}$. So $y t^{-\l} x \i \in \Adm(-w_S \l)$. On the other hand, if $y t^{-\l} x \i \in \Adm(-w_S \l)$, then $y t^{-\l} x \i \le t^{-w \l}=w t^{-\l} w \i$ for some $w \in W^J$. Again by Proposition \ref{tl}, there exists $u \in W_J$ such that $y \le w u \le x$. Therefore $(x, y) \in Q_J$. 
\end{proof}

Theorem \ref{T:combin}(2) generalizes \cite[Theorem~3.16]{KLS}.

\section{Applications}
\label{s:applications}
\subsection{}
It is a classical result of Bj\"{o}rner and Wachs \cite{BW} that intervals in the Bruhat order of a Coxeter group satisfy nice combinatorial properties known as {\it thinness} and {\it shellability}.  Verma \cite{V} showed that the same intervals are {\it Eulerian}.  Dyer \cite{Dye} extended these results by showing that these intervals and their duals were more generally {\it EL-shellable}.  For the definitions of these combinatorial properties, we refer the reader to \cite{BB}; they will not play a role elsewhere in this paper.

Since Theorem \ref{T:combin} identifies each $Q_J$ with a convex subposet of (dual) affine Bruhat order we immediately obtain

\begin{cor}
The poset $Q_J$ is thin, Eulerian, and EL-shellable.
\end{cor}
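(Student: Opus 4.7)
The plan is to transport the three combinatorial properties (thinness, Eulerianness, EL-shellability) from the Bruhat order on $\etW$ to $(Q_J,\preceq)$ via the bijection $\theta\colon (x,y)\mapsto y t^{-\l} x^{-1}$ of Theorem~\ref{T:combin}(2). Thinness and Eulerianness are self-dual properties of graded posets, and Dyer~\cite{Dye} establishes EL-shellability of Bruhat intervals together with their duals, so the reversal of order appearing in Theorem~\ref{T:combin}(2) presents no obstruction to the inheritance of any of the three properties.

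The crux of the argument is to verify that
\[
S \;:=\; W t^{-\l} W \cap \Adm(-w_S \l)
\]
is a \emph{convex} subposet of $(\etW,\le)$: if $a \le b \le c$ in $\etW$ and $a,c \in S$, then $b \in S$. Convexity is exactly the condition ensuring that every interval of $(Q_J,\preceq)$ corresponds under $\theta$ to a Bruhat interval of $\etW$ equipped with the opposite order. Once this is in place, Björner--Wachs~\cite{BW}, Verma~\cite{V}, and Dyer~\cite{Dye} yield the three properties for $(Q_J,\preceq)$ immediately.

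Convexity splits into two parts. The containment $b \in \Adm(-w_S\l)$ is automatic, since by definition $\Adm(-w_S\l)=\bigcup_{w\in W}[1,t^{-w\l}]$ is a down-set of $\etW$, and $b \le c \in \Adm(-w_S\l)$. The substantive part is to show $b \in W t^{-\l} W$, i.e.\ that $b$ cannot slip into a strictly smaller double coset $W t^{-\mu} W$ for some dominant coweight $\mu$ below $\l$. The natural route is to combine the admissibility $b \le t^{-w\l}$ with the hypothesis $a\le b$, use Proposition~\ref{tl} to pass to the $W^J\times W$ parameterization, and invoke the length identity $\ell(y t^{-\l} x^{-1})=\ell(t^{-\l})+\ell(y)-\ell(x)$ together with $\ell(a)\le\ell(b)\le\ell(c)$ to exclude drops in translation type. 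This convexity of the admissible locus within the double coset is the genuinely substantive point; the transfer of combinatorial structure afterward is formal.
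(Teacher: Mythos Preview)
Your strategy matches the paper's exactly: deduce the corollary from Theorem~\ref{T:combin}(2) by noting that $S = Wt^{-\l}W \cap \Adm(-w_S\l)$ is convex in $(\etW,\le)$, so that every interval of $(Q_J,\preceq)$ is the dual of a Bruhat interval, and then invoke Bj\"orner--Wachs, Verma, and Dyer. The paper asserts convexity in a single sentence without further argument.

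Where you go beyond the paper and try to justify convexity, the second half of your sketch does not work as written. You correctly observe that $\Adm(-w_S\l)$ is a down-set, which handles closure from above. But to show $b \in Wt^{-\l}W$ you propose to invoke Proposition~\ref{tl} and the length identity $\ell(yt^{-\l}x^{-1}) = \ell(t^{-\l}) + \ell(y) - \ell(x)$. Both of these apply only to elements already known to lie in $Wt^{-\l}W$: Proposition~\ref{tl} compares two elements of that fixed double coset, and the length identity is the formula specific to it. Neither can be used to rule out $b \in Wt^{-\mu}W$ for some other dominant $\mu$, so the argument is circular at that step. The clean fix uses property~(4) of \S\ref{tr} (monotonicity of $\ast$) together with property~(7): from $a \le b \le c$ with $a,c \in Wt^{-\l}W$ one gets
\[
w_S t^{-\l} \;=\; w_S \ast a \ast w_S \;\le\; w_S \ast b \ast w_S \;\le\; w_S \ast c \ast w_S \;=\; w_S t^{-\l},
\]
whence $\max(WbW) = w_St^{-\l}$ and therefore $b \in Wt^{-\l}W$. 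Combined with your down-set observation this gives the convexity you need, and the rest of your argument goes through.
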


This result was first established by Williams \cite{W}, who proved the more general result that the poset obtained from $Q_J$ by adjoining a maximal element is shellable.

\subsection{} 

Recall that a nonzero dominant coweight $\l$ is called {\it minuscule} if $\langle \l, \th \rangle=1$, where $\th \in R^+$ is the highest root.
Now we discuss the length-generating function $F_\l(q)$ of the admissible set $\Adm(-w_S \l)$ for a minuscule coweight $\l$. By definition, $$F_\l(q)=\sum_{w \in \Adm(-w_S \l)} q^{\ell(w)}.$$ This is the number of ${\mathbb F}_q$-rational points of the union of all opposite affine Schubert cells corresponding to the admissible set, where ${\mathbb F}_q$ is the finite field with $q$ elements. It is proved in \cite{PR} and \cite{Zhu} that this union is the special fiber of the local model of Shimura variety. 

Now by Theorem \ref{T:combin}, $F_\l(q)=\sum_{(x, y) \in Q_J} q^{\ell(y t^{-\l} x)}=\sum_{(x, y) \in Q_J} q^{\<\l, 2 \rho\>+\ell(y)-\ell(x)}$, where $\rho$ is the half sum of the positive roots of $G$. 

On the other hand, as we'll see in the appendix, $(Q_J, \preceq)$ is combinatorially equivalent to the poset of totally nonnegative cells in the cominuscule flag variety $G/P_J$ (that is, a partial flag variety $G/P_J$ where $J=\{i \in S; \<\l, \a_i\>=0\}$ for a minuscule coweight $\l$). The dimension of the cell corresponding to $(x, y) \in Q_J$ is $\ell(x)-\ell(y)$. Let $$A_J(q)=\sum_{(x, y) \in Q_J} q^{\ell(x)-\ell(y)}$$ be the rank generating function of totally nonnegative cells in $G/P_J$. Then we have that 
\begin{equation}\label{E:FA}
F_\l(q)=q^{\<\l, 2 \rho\>} A_J(q \i).
\end{equation}

In particular, the cardinality of $\Adm(-w_S \l)$ is $F_\l(1)=A_J(1)$.   When $G$ is of classical type, the numbers $A_J(1)$ and in some cases also the generating function $A_J(q)$ have been calculated:

\subsubsection{Type A}
Let $A_{k,n}(q) = A_J(q)$ where $G/P_J$ is the Grassmannian $\Gr(k,n)$ of $k$-planes in $n$-space, and similarly define $F_{k,n}(q)$.  Postnikov \cite{Pos} calculated $A_{k,n}(1)$ and Williams \cite{W2} established the formula
$$
A_{k,n}(q) = q^{-k^2} \sum_{i=0}^{k-1} (-1)^i \binom{n}{i} (q^{ki}[k-i]^i[k-i+1]^{n-i}-q^{(k+1)i}[k-i-1]^i[k-i]^{n-i})
$$
which by \eqref{E:FA} gives
$$
F_{k,n}(q) = \sum_{i=0}^{k-1} (-1)^i \binom{n}{i} (q^{i(n-k+1)}[k-i]^i[k-i+1]^{n-i}-q^{n+ni-ki}[k-i-1]^i[k-i]^{n-i}).
$$ Here $[i]=1+q+\cdots+q^{i-1}$ denotes the $q$-analog of $i$. 

In particular, 
$$
F_{k, n}(1)=\sum_{i=0}^{k-1} (-1)^i \binom{n}{i} ((k-i)^i (k-i+1)^{n-i}-(k-i-1)^i (k-i)^{n-i}).
$$

The formulas for $F_{1, n}(1)$ and $F_{2, n}(1)$ was first established by Haines in \cite[Proposition 8.2 (1) \& (2)]{Ha}. 

\subsubsection{Type B}
Let $F_{B_n}(q)$ denote $F_\lambda(q)$ for $\lambda = \omega_1^\vee$ the unique minuscule coweight when $G$ is adjoint of type $B$.  Similarly define $A_{B_n}(q)$.
\begin{prop}\label{P:B}
We have
$$
\sum_{n \geq 0} F_{B_n}(q)x^n = \frac{1+(-q-3q^2)x+(-q+5q^3+4q^4)x^2+q^4(-2-5q-3q^2)x^3+q^6[2]^2x^4}{(1-q^2x)(1-(q+q^2)x)(1 - [2]^2x + q^3[2]x^2)}.
$$
\end{prop}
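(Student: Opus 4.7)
The plan is to apply equation~\eqref{E:FA}, which in type $B_n$ with $\l = \o_1^\vee$ and $\<\o_1^\vee, 2\rho\> = 2n-1$ reduces the proposition to computing the rank-generating function $A_{B_n}(q) = \sum_{(x,y) \in Q_J} q^{\ell(x)-\ell(y)}$ of totally nonnegative cells in the odd quadric $G/P_J = Q^{2n-1}$, where $J = \{2, \ldots, n\}$; indeed, $F_{B_n}(q) = q^{2n-1} A_{B_n}(q^{-1})$.

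In type $B_n$ with $J = \{2, \ldots, n\}$, the minimal coset representatives $W^J$ form a total chain $u_0 < u_1 < \cdots < u_{2n-1}$ with $\ell(u_k) = k$, explicitly $u_k = s_k s_{k-1} \cdots s_1$ for $0 \leq k \leq n$ and $u_{n+j} = s_{n-j} s_{n-j+1} \cdots s_{n-1} s_n s_{n-1} \cdots s_1$ for $1 \leq j \leq n-1$. For each $u_k$ one computes the Poincar\'e polynomial $P_{u_k}(q) = \sum_{y \leq u_k,\, y \in W} q^{\ell(y)}$ of the Bruhat interval $[e, u_k] \subset W$, so that $A_{B_n}(q) = \sum_{k=0}^{2n-1} q^{k} P_{u_k}(q^{-1})$. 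For $0 \leq k \leq n$, the reduced expression of $u_k$ uses distinct simple reflections, the Bruhat interval $[e, u_k]$ is Boolean, and $P_{u_k}(q) = (1+q)^k$. For $k = n+j$ with $j \geq 1$, the reduced expression of $u_k$ is essentially unique but contains the short reflection $s_n$ once and each of $s_{n-1}, \ldots, s_{n-j}$ twice; the Bruhat interval is no longer Boolean, and its Poincar\'e polynomial must be computed by careful subword enumeration or by induction on $j$ using the factorization $u_{n+j} = s_{n-j} \cdot u_{n+j-1}$.

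Having obtained $A_{B_n}(q)$, and hence $F_{B_n}(q)$, one recognizes the claimed rational generating function by showing that $\{F_{B_n}(q)\}_{n \geq 0}$ satisfies the fourth-order linear recurrence in $n$ encoded by the denominator $(1 - q^2 x)(1 - q[2]x)(1 - [2]^2 x + q^3 [2]x^2)$ (with $[2] = 1+q$); this can be verified by a combinatorial recursion relating $Q_J$ for $B_n$ to $Q_J$ for $B_{n-1}$ via the standard embedding, or more pragmatically by direct computation of $F_{B_n}$ for enough initial values $n = 0, 1, 2, 3$ to pin down both denominator and numerator. The main technical obstacle is the uniform computation of the non-Boolean polynomials $P_{u_{n+j}}(q)$, complicated by the short reflection $s_n$ and the type-$B$ braid relation $m_{n-1,n}=4$. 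An alternative that would bypass this and also conceptually explain the form of the denominator is to compute $F_{B_n}(q)$ directly via the lattice-theoretic description of the special fiber of the local model of a corresponding orthogonal Shimura variety (cf.~\cite{PR, Zhu}).
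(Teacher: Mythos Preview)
Your reduction via \eqref{E:FA} to $A_{B_n}(q)$ and the decomposition
\[
A_{B_n}(q)=\sum_{k=0}^{2n-1} q^{k}\,P_{u_k}(q^{-1}),\qquad P_{u_k}(q)=\sum_{y\le u_k}q^{\ell(y)},
\]
are both correct, as is the observation that $P_{u_k}(q)=(1+q)^k$ for $0\le k\le n$.  This is also the starting point of the paper's proof.  The divergence is in how the remaining work is organized.

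The paper does not compute the individual $P_{u_{n+j}}$.  Instead it invokes from \cite[\S9]{LW} a \emph{first-order} recursion
\[
A_{B_{n+1}}(q)=1+(1+q)\,A_{B_n}(q)+\hat b_{n+1}(q),
\]
where $\hat b_n(q)=\sum_{y\le w_Sw_J}q^{\ell(w_Sw_J)-\ell(y)}=q^{2n-1}P_{u_{2n-1}}(q^{-1})$ is precisely the contribution of the top cell $x=u_{2n-1}$.  The rational generating function for $\hat b_n(q)$ is then quoted from \cite[Proposition~11.1]{LW}, and the stated formula follows by solving the recursion and applying \eqref{E:FA}.  So what you identify as ``the main technical obstacle'' (the non-Boolean $P_{u_{n+j}}$) is in the paper isolated to the single top term and outsourced to \cite{LW}; the intermediate $P_{u_{n+j}}$ for $1\le j<n-1$ never need to be computed separately, because the recursion absorbs them into $(1+q)A_{B_n}$.

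There is a genuine gap in one of your proposed completions.  Computing $F_{B_n}$ for $n=0,1,2,3$ and matching coefficients ``to pin down both denominator and numerator'' is not a proof: it presupposes that $\sum_n F_{B_n}(q)x^n$ is rational in $x$ with numerator and denominator of degree at most~$4$, which is exactly what must be shown.  Your other option---a combinatorial recursion relating $Q_J$ in $B_n$ to $B_{n-1}$---is viable, and is in effect what the paper does via \cite{LW}, but you have not supplied it.  If you want a self-contained argument along your lines, you must actually establish a closed form (or rational generating function in $n$) for $P_{u_{n+j}}(q)$ and then prove, not guess, the linear recurrence; the inductive step $u_{n+j}=s_{n-j}u_{n+j-1}$ you mention is the right tool, but the $m_{n-1,n}=4$ braid relation makes this nontrivial and it must be carried out in full.
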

\begin{proof}
Using the combinatorial description in \cite[Section 9]{LW}, we have the recursion
$$
A_{B_{n+1}}(q) =1+ (1+q)A_{B_{n}}(q) + \hat b_{n+1}(q)
$$
for $n \geq 1$, where as in \cite[Proposition 11.1]{LW}, $\hat b_{n}(q) = \sum_{(w_Sw_J,y) \in Q_J} q^{\ell(w_Sw_J)-\ell(y)}$.  This gives
$$
\sum_{n \geq 0} A_{B_{n}}(q)x^n = \frac{\hat b(x,q) -(1+q)x + \frac{x}{1-x}}{1-(1+q)x}
$$ 
where $\hat b(x,q) = \sum_{n \geq 0} \hat b_n(q) x^n$, and we have used the initial conditions $A_{B_0}(q) = 1$ and $A_{B_1}(q) = 2+q$.  Substituting the generating function for $\hat b_n(q)$ given in \cite[Proposition 11.1]{LW}, and using \eqref{E:FA} gives the stated result.
\end{proof}

\subsubsection{Type C}
Let $F_{C_n}(q)$ denote $F_\lambda(q)$ where $\lambda = \omega_n^\vee$ is the unique minuscule coweight when $G$ is adjoint of type $C$.  Haines \cite[Proposition 8.2 (3)]{Ha} showed that $F_{C_n}(1)=\sum_{i=0}^n 2^{n-i} n!/i!$, which is the greatest integer less than $2^n n! \sqrt{e}$.  This calculation was also done by Lam and Williams \cite[Proposition 11.3]{LW} where it is shown that $F_{C_n}(1)$ satisfies the recurrence $F_{C_0}(1) = 1$ and $F_{C_{n+1}}(1) = 2(n+1)F_{C_n}(1)+1$.

\subsubsection{Type D}
Let $F_{D_n}(q)$ denote $F_\lambda(q)$ where $\lambda = \omega_1^\vee$ is the minuscule coweight for $G$ simple of type $D$, such that $G/P_J$ is an even dimensional quadric.  Similarly define $A_{D_n}(q)$. 

\begin{prop}
We have
$$
\sum_{n \geq 0} F_{D_n}(q)x^n = \frac{1}{(1-q^2x)(1-(q+q^2)x)(1 - [2]^2x + q^3[2]x^2)} \times 
$$
{\footnotesize
$$
1-(q+3q^2)x-(q^2-q^3-4q^4)x^2-(2q+3q^2-2q^3-8q^4-2q^5+3q^6)x^3
+(2q^3+3q^4-3q^5-9q^6-4q^7+q^8)x^4
-(q^6-3q^8-2q^9)x^5
$$
}
\end{prop}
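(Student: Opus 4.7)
The plan is to mirror the proof of Proposition \ref{P:B}, translating from $F_{D_n}(q)$ to a type $D$ analogue $A_{D_n}(q)$ of the cell generating function via \eqref{E:FA}, and then exploiting a recursion for $A_{D_n}(q)$ derived from the combinatorial description of the totally nonnegative cells in the even-dimensional quadric $G/P_J$. First I would compute $\langle \omega_1^\vee, 2\rho\rangle$ in type $D_n$: since $G/P_J$ is the $(2n-2)$-dimensional quadric, this exponent equals $2n-2$, so \eqref{E:FA} gives $F_{D_n}(q) = q^{2n-2} A_{D_n}(q^{-1})$. Hence it suffices to produce a closed rational form for $\sum_{n \ge 0} A_{D_n}(q) x^n$; the transformation $x \mapsto q^2 x$, $q \mapsto q^{-1}$, followed by clearing of powers of $q$, will then yield the claimed generating function for $\sum_{n \ge 0} F_{D_n}(q) x^n$.

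Next I would extract a recursion for $A_{D_n}(q)$ from the cominuscule combinatorial model of $Q_J$ for the even quadric, exactly as in \cite[Section 9]{LW}. Stratifying the pairs $(x,y) \in Q_J$ according to whether $x = w_S w_J$ or $x$ is a shorter element (and then removing the rightmost simple reflection of $x$, so as to relate the type $D_{n+1}$ cell count to that of type $D_n$) should give a recursion of the shape
\begin{equation*}
A_{D_{n+1}}(q) = c(q) + d(q)\, A_{D_n}(q) + \hat d_{n+1}(q),
\end{equation*}
where $\hat d_n(q) = \sum_{(w_Sw_J, y) \in Q_J} q^{\ell(w_Sw_J)-\ell(y)}$ is the type $D$ analogue of $\hat b_n(q)$, and whose ordinary generating function is computable by the methods of \cite[Section 11]{LW}. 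The fact that the denominator $1 - [2]^2 x + q^3[2]x^2$ matches the type $B$ case strongly suggests that the polynomials $c(q), d(q)$ are the same as in type $B$; the only genuinely new input is the type $D$ generating function $\hat d(x, q) = \sum_n \hat d_n(q) x^n$.

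Finally, I would substitute the explicit form of $\hat d(x, q)$ into the generating-function version of the recursion,
\begin{equation*}
\sum_{n \ge 0} A_{D_n}(q) x^n = \frac{\hat d(x,q) + \text{initial correction terms}}{1 - (1+q)x},
\end{equation*}
using the initial values $A_{D_0}(q)$, $A_{D_1}(q)$, $A_{D_2}(q)$ to fix the correction terms. Performing the substitution of the first step and collecting the numerator will give the claimed formula.

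The main obstacle is combinatorial rather than conceptual: the type $D$ Dynkin diagram has a fork at the end opposite the cominuscule node $\omega_1^\vee$, so the combinatorial model for $Q_J$ in the even quadric differs from the type $B$ odd quadric by an extra pair of branching cells near the fork. This is precisely what produces the more complicated numerator. Verification against the low-rank cases $D_3 \simeq A_3$ (where $F_{D_3}(q)$ must coincide with Williams's formula for $F_{2,4}(q)$) and $D_4$ (where the quadric is $6$-dimensional) provides a concrete sanity check on both the recursion and the numerator polynomial.
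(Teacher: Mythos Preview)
Your proposal is correct and follows essentially the same route as the paper: the paper establishes the recursion $A_{D_{n+1}}(q)=1+(1+q)A_{D_n}(q)+\hat d_{n+1}(q)$ for $n\ge 2$ from \cite[Section~9]{LW}, cites \cite[Proposition~11.2]{LW} for $\hat d(x,q)$, fixes the initial values $A_{D_0}(q)=1$, $A_{D_1}(q)=2+q$, $A_{D_2}(q)=4+4q+q^2$, and then applies \eqref{E:FA} exactly as in the type $B$ argument. Your anticipation that $c(q)=1$, $d(q)=1+q$ as in type $B$, and that the fork forces the recursion to start at $n\ge 2$ (hence three initial conditions), is precisely what happens.
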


\begin{proof}
Using the combinatorial description in \cite[Section 9]{LW}, we have the recursion
$$
A_{D_{n+1}}(q) =1+ (1+q)A_{D_{n}}(q) + \hat d_{n+1}(q)
$$
for $n \geq 2$, where as in \cite[Proposition 11.2]{LW}, $\hat d_{n}(q) = \sum_{(w_Sw_J,y) \in Q_J} q^{\ell(w_Sw_J)-\ell(y)}$.  Declaring the initial conditions $A_{D_0}(q) = 1$, $A_{D_1}(q) = 2+q$, and $A_{D_2}(q) = 4+4q+q^2$ and proceeding as in the proof of Proposition \ref{P:B}, we obtain the stated result.
\end{proof}

\subsubsection{Type D}
The other minuscule coweights for type $D$ give the even orthogonal Grassmannians.  
The authors do not know of a calculation of $F_\lambda(q)$ in this case.  Part of the enumeration is done in \cite[Theorem 11.11]{LW}.

\section{Geometric comparison}

\subsection{} 
\label{ss:strata}
In this section, we explain some geometry behind the combinatorial comparison. Here we consider three stratified spaces. 

Let $J \subset S$ and $\lambda \in P^+$ be related by $J = \{i \in S; \langle \lambda,\alpha_i \rangle = 0\}$. Let $P_J$ be the standard parabolic subgroup of type $J$ and $L_J$ the standard Levi subgroup. Let $U_{P_J}$ be the unipotent radical of $P_J$. Let $\cp_J$ be the variety of parabolic subgroups conjugate to $P_J$. Then it is known that $\cp_J \cong G/P_J$. 

The first stratified space we consider is the partial flag variety $\cp_J \cong G/P_J$. By \cite{L} and \cite{R}, $G/P_J=\sqcup_{(x, y) \in Q_J} \Pio^{x}_y$ where $\Pio^{x}_y= \pi(\Xo^{x}_y)$ are the open projected Richardson varieties.  For any $(x, y) \in Q_J$, the closure of $\Pio^{x}_y$ is the union of $\Pio^{x'}_{y'}$, where $(x', y')$ runs over elements in $Q_J$ such that $(x', y') \preceq (x, y)$.

The second stratified space we consider is the variety $Z_J$ introduced by Lusztig in \cite{L1}. By definition, $$Z_J=(G \times G)/R_J,$$ where $R_J=\{(l u, l u'); l \in L_J, u, u' \in U_{P_J}\}$. 

For $(x, y) \in W^J \times W$, we define \begin{gather*} [J, x, y]^{+, -}=(B \times B^-) (x, y) R_J/R_J \subset Z_J.\end{gather*}

By \cite[2.2 \& 2.4]{HL}, $Z_J=\sqcup_{(x, y) \in W^J \times W} [J, x, y]^{+, -}$ and the closure of $[J, x, y]^{+, -}$ in $Z_J$ is the union of $[J, x', y']^{+, -}$, where $(x', y')$ runs over elements in $W^J \times W$ such that $(x', y') \preceq (x, y)$. 

The third stratified space is contained in the loop group $G(\K)$. Let $\O=\C[[t]]$ and $\O^-=\C[t \i]$. Let $I$ be the inverse image of $B$ under the map $p^+: G(\O) \to G$ by sending $t$ to $0$ and $I^-$ be the inverse image of $B^-$ under the map $p^-: G(\O^-) \to G$ by sending $t \i$ to $0$. Then we have that $G(\O^-) t^{-\lambda} G(\O)=\sqcup_{w \in W t^{-\l} W} I^- w I$. The closure of $I^- w I$ in $G(\O^-) t^{-\lambda} G(\O)$ is the union of $I^- w' I$, where $w'$ runs over elements in $W t^{-\l} W$ such that $w \le w'$ for the Bruhat order on $\etW$. 
 
We define $f: G(\O^-) t^{-\l} G(\O) \to Z_J$ as $f (g t^{-\l} (g') \i)=(p^+(g'), p^-(g)) R_J/R_J$ for $g \in G(\O^-)$ and $g' \in G(\O)$. Note that there is more than one way to write an element in 
$G(\O^-) t^{-\l} G(\O)$ as $g t^{-\l} (g') \i$ for $g \in G(\O^-)$ and $g' \in G(\O)$. Thus we need to check that the map $f$ is well-defined. 

\begin{lem}\label{lem:f}
The map $f$ is well-defined. 
\end{lem}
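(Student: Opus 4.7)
The plan is to translate well-definedness into a single core statement. If $g_1 t^{-\l}(g_1')^{-1} = g_2 t^{-\l}(g_2')^{-1}$ with $g_i \in G(\O^-)$ and $g_i' \in G(\O)$, set $a = g_2^{-1}g_1 \in G(\O^-)$ and $b = (g_2')^{-1}g_1' \in G(\O)$; the identity rearranges to $t^\l a t^{-\l} = b$. Because $p$ and $p^-$ are group homomorphisms on their respective domains, well-definedness reduces to the following claim: for every $a \in G(\O^-)$ and $b \in G(\O)$ with $t^\l a t^{-\l} = b$, the pair $(p^-(a), p(b))$ lies in $R_J \subset G \times G$.

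Next I would adopt a geometric viewpoint on $a$. Writing $\O^- = \C[s]$ with $s = t^{-1}$, the element $a$ is a regular map $a\colon \mathbb{A}^1_s \to G$ with $p^-(a) = a(0)$. The equation $b = t^\l a t^{-\l}$ is exactly the assertion that the conjugated loop $\tilde a(s) := s^{-\l} a(s) s^\l$, a priori defined only on $\mathbb{G}_m$, extends regularly to $s = \infty$, with $\tilde a(\infty) = p(b)$. The structural input is the behaviour of the cocharacter $s \mapsto s^\l$ of $T$: because $\l$ is dominant with $W$-stabilizer $W_J$, the conjugation action $g \mapsto s^{-\l}g s^\l$ on $G$ has $P_J$ as its attracting set as $s \to \infty$ and $L_J$ as its fixed locus. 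This is visible root by root from $s^{-\l}x_\a(c)s^\l = x_\a(s^{-\<\l,\a\>}c)$ together with the sign of $\<\l,\a\>$ for $\a$ in $U_{P_J}$, in $L_J$, or in $U_{P_J}^-$.

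I would then apply this to $a$ using the parabolic factorization $G = U_{P_J}^- L_J U_{P_J}$ on the big cell, writing $a(s) = u_-(s)l(s)u_+(s)$. Imposing the extension of $\tilde a$ to $s = \infty$ factor by factor forces $u_-(s) \equiv 1$ (any nonzero $\beta$-coefficient for $\beta \in U_{P_J}^-$ would pick up a positive power of $s$ under $s^{-\l}(\cdot)s^\l$, preventing regularity at $\infty$), $l(s) \equiv l$ constant in $L_J$, and each $\a$-component ($\a \in U_{P_J}$) of $u_+(s)$ polynomial in $s$ of degree at most $\<\l,\a\>$. Evaluating then gives $p^-(a) = a(0) = l \cdot u_+(0)$ and $p(b) = \tilde a(\infty) = l \cdot u''$, where $u_+(0),\, u'' \in U_{P_J}$ record the constant and top-degree coefficients of $u_+$ respectively. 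Since both images lie in $P_J$ with a common Levi part $l$, we conclude $(p^-(a), p(b)) \in R_J$.

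The main obstacle will be that the decomposition $a = u_- l u_+$ is only valid where $a(s)$ lies in the big cell, which a priori may fail on a closed subset of $\mathbb{A}^1_s$. The cleanest resolution is to identify $H := G(\O^-) \cap t^{-\l}G(\O)t^\l$ with the stabilizer in $G(\O^-)$ of the point $t^{-\l}G(\O) \in \Gr$ and use an attracting-cell (Bialynicki-Birula) decomposition to prove directly that every element of $H$ has the normal form $l \cdot u_+$ with $l \in L_J$ and $u_+$ of the constrained shape above; alternatively, one may argue root by root using the pro-unipotent filtration of $G(\O^-)$ together with the constant subgroup $G$, thereby bypassing the need for a single global parabolic factorization.
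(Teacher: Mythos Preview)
Your reduction to showing $(p^-(a),p(b)) \in R_J$ whenever $t^{\l} a t^{-\l}=b$ is correct, and reinterpreting this via the $\mathbb G_m$-action $g \mapsto s^{-\l} g s^{\l}$ is the right geometric picture. This is a genuinely different route from the paper's. The paper never attempts to factorize $a$ through a parabolic big cell; instead it works entirely inside the affine Bruhat/Birkhoff formalism. First it uses the decomposition $K^- t^{-\l} I = \bigcup_{w \in W} I^- w t^{-\l} I$ together with disjointness of the Birkhoff factorization \cite[Theorem~5.23(g)]{Kum} to force the $G(\O)$-side into $P_J$. Then it invokes the Iwahori-type factorization $I_1 = (I_1 \cap t^{\l} I^- t^{-\l})(I_1 \cap t^{\l} I t^{-\l})$ from \cite[5.2.3]{Kum}, refines it root by root, and reads off the common Levi component. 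Your approach is more conceptual and, once completed, yields an explicit description of the stabilizer $H = G(\O^-) \cap t^{-\l} G(\O) t^{\l}$; the paper's is more combinatorial but entirely self-contained modulo those two citations.

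Your own diagnosis of the obstacle is accurate, and it is the one place where the proposal is not yet a proof. The step ``impose the extension to $s=\infty$ factor by factor'' does not follow from the big-cell decomposition alone: a priori $a(s)$ need not lie in $U_{P_J}^- L_J U_{P_J}$ for every $s$, and even where it does, regularity of the product $s^{-\l} a(s) s^{\l}$ at $s=\infty$ does not immediately force regularity of each factor separately (cancellation between a diverging $u_-$-part and the rest is not excluded by a naive argument). Your first proposed fix is the right one: compute $H$ directly as the stabilizer of $t^{-\l} G(\O)$ in $G(\O^-)$. At the Lie-algebra level this is immediate and gives $\Lie(H) = \Lie(L_J) \oplus \bigoplus_{\<\l,\a\> > 0} \bigl(\bigoplus_{0 \le k \le \<\l,\a\>} \C\, s^{k}\bigr)\, \mathfrak g_\a$, but promoting this to the group statement $H = L_J \ltimes H^+$ still requires a connectedness/integration argument or a reference. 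The paper effectively carries out the analogous factorization on the $I_1$ side by hand, which is why its argument, though longer, closes without this residual gap.
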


\begin{proof}
Let $I_1=\ker(p^+)$ and $I_1^-=\ker(p^-)$. Let $g, g' \in G$. Suppose that $g' t^{-\l} g \i \subset I^-_1 t^{-\l} I_1$, we'll show that $(g, g') \in R_J$. 

We have that $\emptyset \neq t^{\l} I_1^- g' t^{-\l} \cap I_1 g \subset t^{\l} G(\O^-) t^{-\l} \cap G(\O)$. We shall study this intersection in more detail. We show that 

(a) $G(\O^-) t^{-\l} I=\cup_{w \in W} I^- w t^{-\l} I$. 

Since $I^- w \subset G(\O^-)$ for all $w \in W$, $\cup_{w \in W} I^- w t^{-\l} I \subset G(\O^-) t^{-\l} I$. On the other hand, for any $i \in S$ and $w \in W$, $s_i I^- w t^{-\l} \subset  I^- s_i w t^{-\l} I \cup I^- w t^{-\l} I$. Hence $s_i \cup_{w \in W} I^- w t^{-\l} I \subset \cup_{w \in W} I^- w t^{-\l} I$. Since $G(\O^-)$ is generated by $I^-$ and $s_i$ for $i \in S$, we have that $G(\O^-) \cup_{w \in W} I^- w t^{-\l} I=\cup_{w \in W} I^- w t^{-\l} I$. In particular, $G(\O^-) t^{-\l} I=\cup_{w \in W} I^- w t^{-\l} I$. (a) is proved. 

Similarly, 

(b) For any $w \in W_J$, $I^- t^{-\l} I w I \subset \cup_{v \in W_J} I^- v t^{-\l} I$. 

Now for any $v \in W_J$ and $w \in {}^J W$, we have that $\ell(v t^{-\l} w)=\ell(v t^{-\l})-\ell(w)$. Hence $v t^{-\l} (I \cap w I^- w \i) \subset I^- v t^{-\l}$ and $I^- v t^{-\l} I w I=I^- v t^{-\l} (I \cap w I^- w \i) w I=I^- v t^{-\l} w I$. 

Now suppose that $g \in B w B$ for some $w \in W$. Then we may write $w$ as $w=x y$ for $x \in W_J$ and $y \in {}^J W$. Then applying (a) and (b) we deduce that $t^{-\l} I_1 g \subset \cup_{v \in W} I^- v t^{-\l} y I$ and $I^- g' t^{-\l} \subset \cup_{v \in W} I^- v t^{-\l} I$. Since $t^{-\l} I_1 g \cap I^- g' t^{-\l} \neq \emptyset$, by the disjointness of the Birkhoff factorization (see \cite[Theorem 5.23(g)]{Kum}) we have $y=1$ and $g \in P_J$. 

Assume that $g=u l$ with $u \in U_{P_J}$ and $l \in L_J$. Then $t^{-\l} g \i t^{\l} \subset l \i I_1^-$ and $t^{\l} I_1^- g' t^{-\l} g^{-1} \subset t^{\l} I_1^- g' l \i I_1^- t^{-\l}=t^{\l} I_1^- g' l \i t^{-\l}$, where for the last equality we use the fact that $G$ normalizes $I_1$.  Hence $t^{\l} I^-_1 g' l \i t^{-\l} \cap I_1 \neq \emptyset$. Now it follows from \cite[5.2.3 (11)]{Kum} that $I_1=(I_1 \cap t^{\l} I^-  t^{-\l})(I_1 \cap t^{\l} I t^{-\l})$. Since $U$ normalizes $I_1$, comparing Lie algebras and using the fact that $I_1$ is connected we obtain $I_1 \cap t^{\l} I  t^{-\l}=(I_1 \cap t^{\l} I_1  t^{-\l}) (I_1 \cap t^{\l} U  t^{-\l})$ and similarly $I_1 \cap t^{\l} I^-  t^{-\l}=(I_1 \cap t^{\l} I^-_1  t^{-\l}) (I_1 \cap t^{\l} U^-  t^{-\l})$.

It is easy to see that $t^{\l} U^- t^{-\l} \cap I_1=\{1\}$ and $t^{\l} U t^{-\l} \cap I_1=t^{\l} U_{P_J} t^{-\l}$. Thus $I_1=(I_1 \cap t^{\l} I^-_1  t^{-\l}) (I_1 \cap t^{\l} I_1  t^{-\l})  t^{\l} U_{P_J} t^{-\l}$.  Hence $g' l \i \in U_{P_J}$ and $(g, g') \in R_J$. The Lemma is proved. 
\end{proof}

\subsection{} The group $G(\O^-) \times G(\O)$ acts transitively on $G(\O^-) t^{-\l} G(\O)$.  It also acts transitively on $Z_J$ via the action $(g, g') \cdot z=(p^+(g'), p^-(g)) z$. The map $f: G(\O^-) t^{-\l} G(\O) \to Z_J$ is $G(\O^-) \times G(\O)$-equivariant. Thus all the fibers are isomorphic. Now we give an explicit description of the fiber over $R_J/R_J$. 

By Lemma \ref{lem:f}, it is $$\{I^-_1 U_{P_J} l t^{-\l} l \i U_{P_J} I_1; l \in L_J\}=I^-_1 U_{P_J} t^{-\l} U_{P_J} I_1,$$ where $I_1=\ker(p^+)$ and $I^-_1=\ker(p^-)$. Since $t^{-\l} U_{P_J} t^{\l} \subset I_1^-$ and $t^{\l} U_{P_J} t^{-\l} \subset I_1$, we have that \begin{align*} I^-_1 U_{P_J} t^{-\l} U_{P_J} I_1 &=I^-_1 t^{-\l} (t^{\l} U_{P_J} t^{-\l}) U_{P_J} I_1 \subset I^-_1 t^{-\l} I_1 U_{P_J} I_1 \\&=I^-_1 t^{-\l} U_{P_J} I_1=I^-_1 (t^{-\l} U_{P_J} t^{\l}) t^{-\l} I_1 \\ & \subset I^-_1 t^{-\l} I_1. \end{align*}

On the other hand, $I^-_1 t^{-\l} I_1 \subset I^-_1 U_{P_J} t^{-\l} U_{P_J} I_1$. Therefore the fiber over $R_J/R_J$ is $$I^-_1 U_{P_J} t^{-\l} U_{P_J} I_1=I^-_1 t^{-\l} I_1 \cong I^-_1 \times I_1/(I^-_1 \cap t^{-\l} I_1 t^{\l})$$ and is an infinite dimensional affine space. 

\subsection{} Let $\D: G \to G \times G$ be the diagonal embedding. Since $\D(P_J) \subset R_J$, there is a unique map $\iota: G/P_J \to Z_J$ such that the following diagram commutes 
\[\xymatrix{ G \ar[rr]^-{\D} \ar[d] & & G \times G \ar[d] \\ G/P_J \ar[rr]^-{\iota} & & Z_J.}\]

We have the following diagram which relates the three stratified spaces
\[\xymatrix{G/P_J \ar[r]^-{\iota} & Z_J & G(\O^-) t^{-\l} G(\O) \ar[l]_-f}.\]

This diagram is compatible with the respective stratifications: for $(x, y) \in W^J \times W$, $f(I^- y t^{-\l} x \i I)=(p^+(I) x, p^-(I^-) y) R_J/R_J=[J, x, y]^{+, -}$, agreeing with Theorem \ref{T:combin}(1).  The following proposition shows that the map $\iota$ preserves the stratifications on $G/P_J$ and $Z_J$, agreeing with Theorem \ref{T:combin}(2). 

\begin{prop}
For $(x, y) \in W^J \times W$, $\iota(G/P_J) \cap [J, x, y]^{+, -} \neq \emptyset$ if and only if $(x, y) \in Q_J$. In this case, $\iota(G/P_J)$ and $[J, x, y]^{+, -}$ intersect transversally and the intersection is $\iota(\Pio^{x}_y)$. 
\end{prop}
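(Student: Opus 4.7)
The plan is to compute $\iota\i([J, x, y]^{+, -})$ explicitly by unwinding the quotient defining $Z_J$, identify it directly with $\Pi^x_y$, and then deduce transversality from a dimension count.

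First I would observe that $R_J$ admits the intrinsic description $\{(p_1, p_2) \in P_J \times P_J : p_1 p_2\i \in U_{P_J}\}$. Expanding the equality $(g, g) R_J = (bx, b^- y) R_J$ for some $b \in B, b^- \in B^-$ then produces the two conditions $g \in bx P_J$ and $(bx)\i (b^- y) \in U_{P_J}$. Taking $g_0 = bx \in Bx$ as a representative of $gP_J$, I would derive the reformulation
\[
\iota\i\bigl([J, x, y]^{+, -}\bigr) \;=\; \pi_J\bigl(Bx \cap B^-y\, U_{P_J}\bigr),
\]
where $\pi_J \colon G \to G/P_J$. Because $U_{P_J} \subseteq B$, this set is contained in $\pi_J(BxB \cap B^- y B) = \Pi^x_y$, so non-emptiness already forces $y \le x$.

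The reverse containment $\Pi^x_y \subseteq \pi_J(Bx \cap B^- y U_{P_J})$ is the main work. Starting with $h \in BxB \cap B^- y B$ and two decompositions $h = b_1 x b_2 = b_3 y b_4$, I would aim to arrange $(b_2, b_4) \in R_J$, which amounts to requiring that the $B_{L_J}$-parts of $b_2$ and $b_4$ (in the factorization $B = B_{L_J} \ltimes U_{P_J}$) agree. The freedom in choosing $b_2$ is precisely left multiplication by $B \cap x\i B x$, and the key computation is the Lie algebra identification
\[
\Lie(B \cap x\i B x) \;=\; \mathfrak h \oplus \bigoplus_{\alpha \in R^+,\; x\alpha \in R^+} \mathfrak g_\alpha.
\]
Since $x \in W^J$ satisfies $x R^+_J \subseteq R^+$, this Lie algebra contains $\Lie(B_{L_J})$, so the projection $B \cap x\i B x \twoheadrightarrow B_{L_J}$ is surjective. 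Hence the $B_{L_J}$-part of $b_2$ can be adjusted to any value, in particular to match that of $b_4$. With the decomposition so arranged, $(h, h) R_J = (b_1 x, b_3 y) R_J$ lies in $[J, x, y]^{+, -}$, giving $\iota(\Pi^x_y) \subseteq [J, x, y]^{+, -}$ and hence the desired equality $\iota\i([J, x, y]^{+, -}) = \Pi^x_y$ for $(x, y) \in Q_J$.

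Finally, for transversality: the dimensions $\dim \Pi^x_y = \ell(x) - \ell(y)$, $\dim Z_J = \dim G$, $\dim \iota(G/P_J) = \dim G/P_J$, and $\dim [J, x, y]^{+, -} = \ell(x) - \ell(y) + \dim P_J$ (readable from the $(B \times B^-)$-orbit/stabilizer description, and consistent with the top stratum being open in $Z_J$) add up so that the intersection has the expected codimension. A tangent-space calculation at a convenient point of $\iota(\Pi^x_y)$, once again invoking the $W^J$-property of $x$ to control the relevant root decomposition, confirms that $T \iota(G/P_J) + T[J, x, y]^{+, -} = T Z_J$, yielding transversality. The main obstacle throughout is the surjectivity of $B \cap x\i B x \twoheadrightarrow B_{L_J}$ for $x \in W^J$ in the middle step; once that is established, everything else is routine.
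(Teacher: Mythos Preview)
Your computation is correct and more explicit than the paper's argument, but the two proofs differ mainly in how much they compute directly versus how much they deduce from structural facts.

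The identification $\iota\i([J,x,y]^{+,-}) = \pi_J(Bx \cap B^- y\,U_{P_J})$ and the ``hard'' containment $\Pi^x_y \subseteq \iota\i([J,x,y]^{+,-})$ both come down to the observation $B_{L_J} \subseteq B \cap x\i B x$ for $x \in W^J$; this is exactly the content behind the paper's one-line assertion ``if $g \in BxB \cap B^- yB$ then $(g,g)R_J \in [J,x,y]^{+,-}$'', which the paper states without justification.  So your key lemma is the same as the paper's, only made explicit.  Where the paper is more economical is in the other containment: rather than prove $\iota\i([J,x,y]^{+,-}) \subseteq \Pi^x_y$ directly, it notes that the $[J,x,y]^{+,-}$ partition $Z_J$ and the $\Pi^x_y$ for $(x,y) \in Q_J$ partition $G/P_J$, so one containment forces equality on each piece (and forces the intersection to be empty when $(x,y) \notin Q_J$).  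Your direct route avoids invoking the stratification of $G/P_J$ from \cite{L,R} as a black box, which is a genuine advantage if one wants a self-contained argument.

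For transversality, the paper's argument is both shorter and stronger than yours as written.  The paper observes that $\iota(G/P_J)$ is a $G_{\diag}$-orbit and $[J,x,y]^{+,-}$ is a $(B \times B^-)$-orbit, and that $\Lie(G_{\diag}) + \Lie(B \times B^-) = \Lie(G \times G)$; this gives $T_z\iota(G/P_J) + T_z[J,x,y]^{+,-} = T_z Z_J$ simultaneously at every intersection point $z$, since tangent spaces to orbits are images of the acting Lie algebras.  Your proposal checks the tangent condition ``at a convenient point'', but a priori that is not enough: transversality is a pointwise condition and neither $G_{\diag}$ nor $B \times B^-$ carries one intersection point to another.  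The gap is easily closed --- precisely by the orbit argument the paper uses --- but as stated your dimension count plus a single tangent-space check does not finish the proof.
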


\begin{proof}
If $g \in B x B \cap B^- y B$, then $(g, g) R_J/R_J \in [J, x, y]^{+, -}$. Thus $\iota(\Pio^{x}_y)=\iota(\pi(\Xo^{x}_y)) \subset \iota(G/P_J) \cap [J, x, y]^{+, -}$, where $\pi: G/B \to G/P_J$ is the projection map. Since $\iota(G/P_J)=\sqcup_{(x, y) \in W^J \times W} \iota(G/P_J) \cap [J, x, y]^{+, -}$ and $G/P_J=\sqcup_{(x, y) \in Q_J} \Pio^{x}_y$, we have that $$ \iota(G/P_J) \cap [J, x, y]^{+, -}=\begin{cases} \iota(\Pio^{x}_y), & \text{ if } (x, y) \in Q_J; \\ \emptyset, & \text{ otherwise}. \end{cases}$$

Since $\iota(G/P_J)$ is a $\D(G)$-orbit on $Z_J$ and $[J, x, y]^{+, -}$ is a $B^- \times B$-orbit on $Z_J$ and $\Lie(\D(G))+\Lie(B^- \times B)=\Lie(G \times G)$, by \cite[Corollary 1.5]{Ri}, the intersection is transversal. 
\end{proof}

\section{$K$-theory comparison}
\label{sec:K}
\subsection{}
\label{ss:Kum}
In this subsection, let $\GG/\B$ be a Kac-Moody flag variety \cite{Kum}.  This is an ind-finite ind-scheme with a stratification by finite-dimensional Schubert varieties.  Let $W$ denote the Kac-Moody Weyl group with positive (resp. negative) roots $R^+$ (resp. $R^-$).  We consider $K$-cohomology with integer coefficients.  Kostant and Kumar \cite{KKK} constructed the Schubert basis of the torus-equivariant $K$-cohomology $K_T(\GG/\B)$, where $T \subset \GG$ is the maximal torus of the Kac-Moody group.   Let $\{\psi^v \mid v \in W\}$ denote the torus-equivariant Schubert basis $\psi^v \in K_T(\GG/\B)$ constructed by Kostant and Kumar \cite{KKK}.  We shall follow the notations of \cite{LSS}, which differ slightly from \cite{KKK}.  For the precise interpretation of $\psi^v$ as the class of a structure sheaf of a Schubert variety in the (possibly infinite-dimensional) $\GG/\B$ we refer the reader to \cite{LSS}.

For $v,w \in W$, we let $e_{v,w} = \psi^v(w):= \psi^v|_w \in K_T(\pt)$ denote the equivariant localization at the $T$-fixed point $v \in \GG/\B$.  A $K$-cohomology class $\psi \in K_T(\GG/\B)$ is completely determined by its equivariant localizations. We review certain facts concerning $e_{v,w}$.

If $W$ is a finite Weyl group, we denote by $w \mapsto w^\st$ the conjugation action $w \mapsto w_S w w_S$ by the longest element $w_S$.  The following result is \cite[Proposition 2.10]{LSS}.


\begin{thm} \label{thm:Kfixedpt}
Let $v, w \in W$ and $w=s_{i_1} \cdots s_{i_p}$ be a reduced expression. For $1 \le j \le p$, set $\b_j=s_{i_1} \cdots s_{i_{j-1}} \a_{i_j}$. Then $$e_{v, w}=\sum (-1)^{p-m} (1-e^{\b_{j_1}}) \cdots (1-e^{\b_{j_m}}),$$ where the summation runs over all those $1 \le j_1<\cdots<j_m \le p$ such that $v=s_{i_{j_1}} \ast \cdots \ast s_{i_{j_m}}$. 
\end{thm}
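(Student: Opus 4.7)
The plan is to prove the formula by induction on $p = \ell(w)$, in direct parallel with the cohomological proof cited above from \cite[11.1.11]{Kum}. The base case $p = 0$ is immediate: only $v = e$ contributes and both sides agree. For the inductive step, I would fix a reduced factorization $w = w' s_i$ with $\ell(w') = p - 1$, derive a recursion expressing $e_{v, w' s_i}$ in terms of $e_{v, w'}$ and $e_{v s_i, w'}$, and match it against the natural splitting of the combinatorial sum according to whether the subsequence $j_1 < \cdots < j_m$ uses the last index $p$.

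The engine of the induction is the $K$-theoretic right Demazure recursion for the Kostant--Kumar dual basis $\{\psi^v\}$ from \cite{KKK}: at the fixed point $w = w' s_i$, the localization $e_{v, w}$ is a linear combination of $e_{v, w'}$ and $e_{v s_i, w'}$ with coefficients drawn from $\{1, \pm(1 - e^{\beta_p})\}$, whose precise form depends on whether $v s_i > v$ or $v s_i < v$. One obtains this by applying the $K$-theoretic Demazure--Lusztig operator $T_i$ (dual to right multiplication by $s_i$) at the fixed-point side; it is the $K$-theoretic upgrade of the divided-difference recursion underlying \cite[11.1.11]{Kum}.

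On the combinatorial side, I would partition the subsequences $j_1 < \cdots < j_m \le p$ contributing to $v$ into two classes: those with $j_m < p$ (subsequences of $1, \ldots, p-1$ whose Demazure product is $v$) and those with $j_m = p$ (the first $m - 1$ indices form a subsequence of $1, \ldots, p-1$ whose Demazure product $v'$ satisfies $v' \ast s_i = v$). By Section \ref{tr}(1), the equation $v' \ast s_i = v$ forces $v s_i \le v$, and when this holds $v' \in \{v, v s_i\}$; in particular the second class is empty when $v s_i > v$. Substituting the inductive hypothesis into each class and collecting the explicit $(1 - e^{\beta_p})$-factor from the terms with $j_m = p$ reproduces exactly the recursion of the preceding paragraph, and the sign $(-1)^{p-m}$ behaves correctly because the two classes differ in $p - m$ by one.

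The principal obstacle is pinning down the $K$-theoretic right Demazure recursion with the correct sign and coefficient conventions. In cohomology this is a clean Leibniz rule for divided differences, but in $K$-theory the Demazure operator carries an extra $(1 - e^{-\alpha_i})$-factor, complicating the bookkeeping on localizations. One can establish the required recursion either algebraically, from the action of the $0$-Hecke algebra on equivariant $K$-theory in the Kostant--Kumar formalism, or geometrically, by pushforward along the last factor of a Bott--Samuelson tower. Once this recursion is in hand, the combinatorial matching is a direct adaptation of the cohomological argument, with each $\beta_j$ replaced by $(1 - e^{\beta_j})$ and each ordinary Bruhat-subword condition replaced by a Demazure-subword condition.
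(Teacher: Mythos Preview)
The paper gives no proof of this theorem: it is stated with the attribution ``The following result is \cite[Proposition 2.10]{LSS}'' and used as a black box. So there is nothing in the paper to compare your argument against.

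Your inductive strategy is the standard one and is essentially how the result is proved in \cite{LSS}. The combinatorial half of your argument is already complete: splitting according to whether $j_m=p$, and using that $v'\ast s_i=v$ has solutions $v'\in\{v,vs_i\}$ precisely when $vs_i<v$ and no solutions otherwise, the right-hand side satisfies
\[
\text{RHS}(v,w)=
\begin{cases}
-\,\text{RHS}(v,w') & vs_i>v,\\
-e^{\beta_p}\,\text{RHS}(v,w')+(1-e^{\beta_p})\,\text{RHS}(vs_i,w') & vs_i<v,
\end{cases}
\]
with $\beta_p=w'\alpha_i$. What you call the ``principal obstacle'' is then just checking that the localizations $e_{v,w}=\psi^v|_w$ obey the same recursion. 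In the conventions of \cite{LSS} this follows from the action of the $0$-Hecke / Demazure operator $T_i$ on the fixed-point basis of $K_T^*(\GG/\B)$ (equivalently, from the pushforward--pullback along the $\PP^1$-fibration associated to $s_i$), together with the transition between $\{\psi^v\}$ and the fixed-point basis. So your sketch is correct; the only thing missing is writing that operator identity down explicitly and verifying the two cases, which is bookkeeping rather than a new idea.
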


Define $E$ to be the (infinite) matrix $E = \left(e_{v,w}\right)$, and set $C = (E^{-1})^T$.  Define the matrix $C'$ by $c_{u^{-1}, v \i}=v \i w_S c'_{w_S v, w_S u}$.  Let $M$ be the Bruhat order matrix given by $m_{v,w} = 1$ if $v \geq w$, and $m_{v,w} = 0$ otherwise.  The following result is a variant of \cite[Proposition 4.16]{KKK}.

\begin{prop}\label{P:EDBM}
We have $E^T = DC'M$, where $D$ is the scalar matrix with value $\prod_{\alpha \in R^+}(1-e^\alpha)$.
\end{prop}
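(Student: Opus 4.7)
The plan is to verify the matrix identity entrywise. Writing $d = \prod_{\alpha \in R^+}(1-e^\alpha)$, the $(v,w)$-entry of $DB'M$ equals $d \cdot \sum_{u \geq w} b'_{v,u}$, so the claim reduces to
$$
e_{w,v} = d \sum_{u \geq w} b'_{v,u}. \qquad(\dagger)
$$
Equivalently, applying M\"obius inversion on Bruhat order---whose M\"obius function is $(-1)^{\ell(w)-\ell(u)}$ by Verma and Deodhar---the identity $(\dagger)$ is equivalent to
$$
d\, b'_{v,u} = \sum_{w \geq u} (-1)^{\ell(w)-\ell(u)}\, e_{w,v}. \qquad (\star)
$$

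The strategy for proving $(\star)$ is as follows. First, use the defining relation $b_{u^{-1},v^{-1}} = v^{-1} w_S\, b'_{w_S v, w_S u}$ to rewrite $b'_{v,u}$ in terms of the entries of $B = (E^{-1})^T$. Geometrically, this $w_S$-twist reflects the swap between Schubert and opposite Schubert varieties on $G/B$ under the action of the longest element. Second, the scalar $d$ arises as the full tangent Euler class $e_{w_S, w_S} = \prod_{\alpha \in R^+ \cap w_S R^-}(1-e^\alpha)$ at the fixed point $w_S$---the $K$-theoretic analogue of Lemma \ref{L:K}(1). Third, starting from the orthogonality $\sum_x e_{x,v}\,b_{x,u} = \delta_{u,v}$ defining $B$, apply the $w_S$-twist on both indices and then telescope over Bruhat upper ideals to isolate the alternating sum on the right of $(\star)$. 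Concretely, one derives the $K$-theoretic analogue of Lemma \ref{L:fin},
$$
\sum_{\substack{y \in W\\ u \leq y \leq u'}} e_{u^{-1},y^{-1}}\, (y^{-1}w_S\, e_{w_S u', w_S y})\,(y^{-1}w_S\, e_{w_S, w_S})^{-1} = \delta_{u, u'},
$$
which follows from the $K$-theoretic Kostant-Kumar product identity as in \cite{KKK} by the same argument that yields Lemma \ref{L:fin} in cohomology. From this orthogonality, $(\star)$ follows after multiplying through by $e_{w_S, w_S}$ and reindexing.

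The main obstacle is the careful tracking of the $w_S$-twist through the Bruhat-ideal sums: one must verify that the substitution $u \mapsto u^{-1} w_S$ interchanges upper and lower Bruhat ideals correctly so that, when combined with the alternating M\"obius sum, the resulting expression telescopes against the diagonal Euler class $e_{w_S,w_S}$ to yield precisely $e_{w,v}$. This is the direct $K$-theoretic variant of the argument in \cite[Proposition 4.16]{KKK}; once the bookkeeping of twists and Euler factors is properly arranged, the identity follows by a formal matrix manipulation.
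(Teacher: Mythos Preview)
There is a genuine gap. The single-sum identity you propose as the ``$K$-theoretic analogue of Lemma \ref{L:fin}'',
\[
\sum_{\substack{y \in W\\ u \leq y \leq u'}} e_{u^{-1},y^{-1}}\,(y^{-1}w_S\, e_{w_S u', w_S y})\,(y^{-1}w_S\, e_{w_S,w_S})^{-1} \;=\; \delta_{u,u'},
\]
does not hold in $K$-theory, and cannot be obtained ``by the same argument'' as in cohomology. The cohomological proof of Lemma \ref{L:fin} rests on \cite[11.1.22]{Kum}, which gives the clean relation $d_{w_S u', w_S v}\,d_{w_S,w_S}^{-1} = w_S v\,c_{(u')^{-1},v^{-1}}$ with no Bruhat sum. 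The $K$-theoretic replacement of that relation is precisely Proposition \ref{P:EDBM} itself, and the extra factor $M$ on the right means that $e_{w_S u', w_S v}\,e_{w_S,w_S}^{-1}$ equals a \emph{sum} of twisted $b$-entries rather than a single one. This is exactly why the correct $K$-analogue, Lemma \ref{L1:fin}, carries a double sum over $v \le u'' \le u'$ rather than your single sum over $y$.

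Consequently your route is either incorrect (the single-sum lemma fails) or circular (the correct double-sum version, Lemma \ref{L1:fin}, is proved \emph{from} Proposition \ref{P:EDBM}, not conversely). The paper takes the opposite and much shorter path: it imports \cite[Proposition 4.16]{KKK} directly and matches conventions by setting $E_{KK}=(M^T)^{-1}E$ and $B_{KK}=(E_{KK}^{-1})^T$, so that the Kostant--Kumar identity $E_{KK}^T = D B'_{KK} M^{-1}$ becomes $E^T = DB'M$ after multiplying on the right by $M$ and unwinding the definition of $B'$. Your reformulations $(\dagger)$ and $(\star)$ are correct and harmless, but to actually establish $(\star)$ you must invoke \cite[Proposition 4.16]{KKK} (or reprove it), not an orthogonality lemma that already presupposes the result.
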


\begin{proof}
Define $E_{KK} = (M^T)^{-1}E$ and $C_{KK} = (E_{KK}^{-1})^T$.  Let $E'_{KK}$  be the ``$E$''-matrix of \cite{KKK}; then by \cite[Appendix A]{LSS} we have $(e'_{KK})_{v,w} = (e_{KK})_{v^{-1},w^{-1}}$.  Note that $m_{v,w} = m_{v^{-1},w^{-1}} = m_{w_0w,w_0v}$.  Proposition 4.16 of \cite{KKK} gives $E_{KK}^T = D C'_{KK} M^{-1}$, where $(c'_{KK})_{v,u} = v (c_{KK})_{u^{-1}w_S,v^{-1}w_S}$.  Then 
$$c'_{v,w} = v c_{w^{-1}w_S,v^{-1}w_S} = \sum_{u^{-1}w_S} n_{w^{-1}w_S,u^{-1}w_S} v(c_{KK})_{u^{-1}w_S,v^{-1}w_S} = \sum_{u} n_{u,v} (c'_{KK})_{v,u},$$
where $N = M^{-1}$.  It follows that $E^T = (M^T E_{KK})^T = E_{KK}^T M = DC'_{KK} = DC' M$.
\end{proof}

Now we prove some properties of $e_{v, w}$. 

\begin{lem}\label{L:K1} Let $W$ be any Kac-Moody Weyl group.
\begin{enumerate}
\item
We have
$$
e_{x,x} = \prod_{\alpha \in R^+ \cap x R^-}  (1-e^\alpha).
$$
\item
Suppose $W$ is a finite Weyl group.  Then
$$
e_{(x^\st)^{-1},(y^\st)^{-1} } = w_S y \i e_{x, y}.
$$
\item
Suppose $x,u,v \in W$ and $\ell(u v)=\ell(u)+\ell(v)$.  Then
$$
e_{x,uv} = \sum e_{u', u} (u e_{v',v}),
$$
where the summation runs over $u' , v' \in W$ such that $x=u' \ast v'$. 
\end{enumerate}
\end{lem}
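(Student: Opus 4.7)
The plan is to adapt the proof of Lemma \ref{L:K} to the $K$-theoretic setting by replacing the cohomological Kostant--Kumar subword formula with the $K$-theoretic formula stated just above. Three ingredients need to be carried over: the length-counting argument for (1), the bijection of subwords via $\st$-reversal for (2), and the factorization of subwords under concatenation for (3). Throughout, the only substantive difference with Lemma \ref{L:K} is that ``reduced subexpression'' is replaced by ``subword whose Demazure product equals the target''.

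For (1), fix a reduced expression $x=s_{i_1}\cdots s_{i_p}$. Since a Demazure product $s_{i_{j_1}}\ast\cdots\ast s_{i_{j_m}}$ has length at most $m$, and $\ell(x)=p$, only the full subword ($m=p$) contributes to the sum for $e_{x,x}$. The associated sign $(-1)^{p-m}$ then equals $1$, and one obtains $e_{x,x}=\prod_{j=1}^p(1-e^{\beta_j})=\prod_{\alpha\in R^+\cap xR^-}(1-e^\alpha)$ by the standard identification of the $\beta_j$'s with the inversion set.

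For (2), the reduced expression $y=s_{i_1}\cdots s_{i_p}$ yields a reduced expression $(y^\st)\i=s_{i_p}^\st\cdots s_{i_1}^\st$. The involution $\st$ is a Coxeter automorphism, and in particular commutes with $\ast$, so the reversal map $(j_1<\cdots<j_m)\mapsto(p+1-j_m<\cdots<p+1-j_1)$ bijects subwords of $y$ with Demazure product $x$ onto subwords of $(y^\st)\i$ with Demazure product $(x^\st)\i$. The root computation $w_Sy\i\beta_j=\beta'_{p+1-j}$ (where $\beta'$ denotes the roots attached to the reduced expression of $(y^\st)\i$) is identical to that in the proof of Lemma \ref{L:K}(2), and the number of factors $m$ is preserved so the signs $(-1)^{p-m}$ match. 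Applying $w_Sy\i$ term-by-term to each monomial $(1-e^{\beta_{j_1}})\cdots(1-e^{\beta_{j_m}})$ produces the monomial $(1-e^{\beta'_{p+1-j_m}})\cdots(1-e^{\beta'_{p+1-j_1}})$ appearing in $e_{(x^\st)\i,(y^\st)\i}$, yielding the stated identity.

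For (3), concatenate the chosen reduced expressions of $u$ and $v$ to obtain a reduced expression of $uv$ of length $q=\ell(u)+\ell(v)$. Given a subword $j_1<\cdots<j_n$ of the concatenation with Demazure product $x$, split at position $p=\ell(u)$ into $u'=s_{i_{j_1}}\ast\cdots\ast s_{i_{j_m}}$ and $v'=s_{i_{j_{m+1}}}\ast\cdots\ast s_{i_{j_n}}$. Associativity of $\ast$ gives $x=u'\ast v'$. The roots $\beta_{j_k}$ for $k>m$ factor as $u\,\beta'_{j_k-p}$ where $\beta'$ denotes the roots attached to the $v$-reduced expression, so the corresponding $K$-theoretic factors satisfy $(1-e^{\beta_{j_k}})=u(1-e^{\beta'_{j_k-p}})$. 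The sign factors as $(-1)^{q-n}=(-1)^{p-m}(-1)^{(q-p)-(n-m)}$, and the two halves assemble into $e_{u',u}(u\,e_{v',v})$. Summing over all valid splittings recovers the claimed formula.

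The one place where care is required is the bookkeeping of the sign $(-1)^{p-m}$: it must factor correctly under concatenation in (3) and be preserved under the $\st$-reversal in (2). Once this sign accounting is verified (as above), the arguments run in strict parallel to those of Lemma \ref{L:K}, with Demazure products in place of reduced subexpressions.
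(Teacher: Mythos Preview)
Your proposal is correct and follows exactly the approach the paper indicates: the paper does not write out a proof of Lemma~\ref{L:K1} but simply states that ``the proofs are similar to the proof of Lemma~\ref{L:K}'', and you have carried out precisely that adaptation, replacing reduced subexpressions by subwords with the prescribed Demazure product and tracking the additional sign $(-1)^{p-m}$ throughout. Your bookkeeping of the sign under the $\st$-reversal in (2) and under concatenation in (3) is accurate, and the bijection in (3) between subwords of $uv$ and pairs of subwords of $u$ and $v$ (grouped by their Demazure products) is exactly what is needed.
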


\begin{proof}
(1) Let $x= s_{j_1} \cdots s_{j_p}$ be a reduced expression. Then $$e_{x, x}=(1-e^{\b_{j_1}})\cdots (1-e^{\b_{j_p}})=\Pi_{\a \in R^+ \cap x R^-} (1-e^\a).$$ 

(2) Let $y=s_{i_1} \cdots s_{i_p}$ be a reduced expression. Then $(y^\st)^{-1}= s^\st_{i_p} \cdots s^\st_{i_1}$ is also a reduced expression. Moreover, $x = s_{i_{j_1}} \ast \cdots \ast s_{i_{j_m}}$ if and only if $(x^\st) \i = s^\st_{i_{j_m}} \ast \cdots \ast s^\st_{i_{j_1}}$. For any $j$, we have that $w_S y \i (1-e^{\b_j})=w_S s_{i_p} \cdots s_{i_{j+1}} (1-e^{-\a_{i_j}})=s^\st_{i_p} \cdots s^\st_{i_{j+1}} (w_S (1-e^{-\a_{i_j}}))=s^\st_{i_p} \cdots s^\st_{i_{j+1}} (1-e^{\a_{i_j^\st}})$. Now \begin{align*} e_{(x^\st) \i, (y^\st) \i} &=\sum (-1)^{p-m} (w_S y \i (1-e^{\b_{j_1}})) \cdots (w_S y \i (1-e^{\b_{j_m}}))=w_S y \i e_{x, y}. \end{align*} Here the summation runs over all those $1 \le j_1<\cdots<j_m \le p$ such that $x = s_{i_{j_1}} \ast \cdots \ast s_{i_{j_m}}$. 

(3) Let $u=s_{i_1} \cdots s_{i_p}$ and $v=s_{i_{p+1}} \cdots s_{i_q}$ be reduced expressions. Let $1 \le j_1<\cdots<j_m \le p<j_{m+1}<\cdots<j_n \le p$ be such that $x = s_{i_{j_1}} \ast \cdots \ast s_{i_{j_n}}$. Then \begin{align*} & (1-e^{\b_{j_1}}) \cdots (1-e^{\b_{j_n}})=((1-e^{\b_{j_1}}) \cdots (1-e^{\b_{j_m}})) ((1-e^{\b_{j_{m+1}}}) \cdots (1-e^{\b_{j_n}})) \\ &=((1-e^{\b_{j_1}}) \cdots (1-e^{\b_{j_m}}))  u \bigl((s_{i_{p+1}} \cdots s_{i_{j_{m+1}-1}} (1-e^{\a_{i_{j_{m+1}}}})) \cdots (s_{i_{p+1}} \cdots s_{i_{j_n-1}} (1-e^{\a_{i_{j_n}}})) \bigr) \end{align*}
Now part (3) follows from Theorem \ref{thm:Kfixedpt}. 
\end{proof}


\begin{lem}\label{L1:fin}
Let $W$ be a finite Weyl group, and $u \leq u' \in W$.  Then
$$
\sum_{\substack{v, u'' \in W \\ u \leq v \leq u'' \leq u'}}e_{u^{-1},v^{-1}} (v^{-1} w_S e_{w_S u'', w_S v}) (v^{-1} w_S e_{w_S,w_S})^{-1} = \delta_{u,u'}.
$$
\end{lem}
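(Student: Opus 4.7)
The plan is to parallel the proof of Lemma~\ref{L:fin}, substituting Proposition~\ref{P:EDBM} for the Kostant--Kumar identity \cite[11.1.22]{Kum} used there. In the cohomological case the corresponding identity is a single-term equality; in $K$-theory the extra factor of $M$ in $E^T = DB'M$ converts it to a sum over a Bruhat interval, and this is precisely the source of the additional summation over $u''$ in the present lemma.

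The first step is to establish the key identity
\[
\sum_{u'' \in [v, u']} (v^{-1} w_S e_{w_S u'', w_S v})(v^{-1} w_S e_{w_S, w_S})^{-1} \,=\, b_{(u')^{-1}, v^{-1}}
\]
for all $v \leq u'$ in $W$. To derive it, I would evaluate the matrix identity $B'M = D^{-1} E^T$ (from Proposition~\ref{P:EDBM}) at the $(w_S v, w_S u')$-entry; after changing variables $y = w_S u''$ in the resulting Bruhat sum and using the support of $B'$ to restrict to $u'' \in [v, u']$, I obtain a formula relating $b'_{w_S v, w_S u'}$ to the left-hand side above, up to the scalar $\Delta = e_{w_S, w_S}$ and a $W$-action. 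Applying the defining relation $b'_{w_S v, w_S u'} = w_S v \cdot b_{(u')^{-1}, v^{-1}}$ and multiplying through by $v^{-1} w_S$ then yields the identity.

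With this in hand, I substitute it into the left-hand side of the lemma, swap the order of summation so that the $u''$-sum is innermost, and collapse it to obtain
\[
\sum_{v \in [u, u']} e_{u^{-1}, v^{-1}} \cdot b_{(u')^{-1}, v^{-1}}.
\]
The conclusion $\delta_{u, u'}$ then follows from the matrix inverse relation $E B^T = I$, which says $\sum_w e_{u^{-1}, w} b_{(u')^{-1}, w} = \delta_{u, u'}$; the support conditions $u \leq v$ (from $e_{u^{-1}, v^{-1}} \neq 0$) and $v \leq u'$ (from $b_{(u')^{-1}, v^{-1}} \neq 0$) automatically restrict the sum to $v \in [u, u']$.

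The main obstacle is the first step, namely the correct derivation of the key identity from Proposition~\ref{P:EDBM}. This requires careful bookkeeping of the Bruhat-order supports of $E$, $B$ and $B'$, the $w_S$-symmetry of Bruhat order under multiplication by the longest element, and the $W$-action on the coefficient ring $K_T^*(\mathrm{pt})$, which interacts with both the indices and the coefficients. Once the key identity is established, the remaining steps are essentially formal linear-algebraic manipulations.
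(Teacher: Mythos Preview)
Your proposal is correct and takes essentially the same approach as the paper: use Proposition~\ref{P:EDBM} to collapse the inner sum over $u''$ to $b'_{w_Sv,w_Su'}$, rewrite this via the defining relation $b_{(u')^{-1},v^{-1}}=v^{-1}w_S\,b'_{w_Sv,w_Su'}$, and then conclude from $B=(E^{-1})^T$ that the remaining sum $\sum_{v} e_{u^{-1},v^{-1}}\,b_{(u')^{-1},v^{-1}}$ equals $\delta_{u,u'}$. One small caution on your derivation of the key identity: evaluating $B'M=D^{-1}E^T$ at the $(w_Sv,w_Su')$-entry literally produces $\sum_{u''\in[v,u']} b'_{w_Sv,w_Su''}=D^{-1}e_{w_Su',w_Sv}$, a sum of $b'$-entries equal to a single $e$-entry rather than the reverse, so isolating $b'_{w_Sv,w_Su'}$ as a Bruhat sum of $e$'s requires the inverse reading of the matrix identity---this is exactly the ``careful bookkeeping'' you flag, and the paper likewise simply asserts the resulting formula directly from Proposition~\ref{P:EDBM}.
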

\begin{proof}
By Proposition \ref{P:EDBM}, we have that
$$
\sum_{\substack{u'' \in W \\ v \leq u'' \leq u'}} e_{w_S u'', w_S v} e_{w_S,w_S}^{-1}=c'_{w_S v, w_S u'}.
$$
By definition, $c_{u'^{-1}, v \i}=v \i w_S c'_{w_S v, w_S u'}$.  Thus 
$$
\sum_{\substack{v, u'' \in W \\ u \leq v \leq u'' \leq u'}}e_{u^{-1},v^{-1}} (v^{-1} w_S e_{w_S u'', w_S v}) (v^{-1} w_S e_{w_S,w_S})^{-1} =\sum_{\substack{v \in W \\ u \leq v \leq u'}} e_{u \i, v \i} c_{u'^{-1}, v \i}=\d_{u, u'}.
$$
\end{proof}

\subsection{}
We shall apply the results of \ref{ss:Kum} in the case where $\GG/\B$ is the finite flag variety $G/B$, and in the case where $\GG/\B$ is the affine flag variety $\Fl = G(\K)/I$.  We return to the conventions of Section \ref{sec:combin}: $W$ denotes the finite Weyl group and $\etW$ denotes the Iwahori-Weyl group.  Note that the results of \ref{ss:Kum} from \cite{Kum} are stated for $W_a$ (that is, for the affine flag variety of the simply-connected $G$), but extend without change to the Iwahori-Weyl group $\etW$.  We shall also abuse notation in two ways.  First, if $x,y \in W$, we shall write $e_{x,y}$ without specifying whether we are considering equivariant localizations of affine or finite Schubert basis, since the two agree.  Secondly, if $x,y \in \etW$, then $\psi^x(y)$ normally takes values in $K_{\hat T}(\pt)$ for the affine torus $\hat T$.  In the following we still denote by $e_{x,y}$ the image of this value in $K_T(\pt)$ for the finite torus $T$.  That is, each affine root is considered as a root of the finite torus $T$ via the natural inclusion $T \hookrightarrow \hat T$.

Let $\Gr = G(\K)/G(\O)$ denote the affine Grassmannian of $G$.  Let $$\Gr_\l = \overline{G(\O)t^{-\l}G(\O)/G(\O)} \subset \Gr$$ be the closure of the $G(\O)$-orbit inside the affine Grassmannian containing the $T$-fixed point labeled by $t^{-\l}$.  Then $\Gr_\l = \sqcup_{\mu \le \l} G(\O)t^{-\mu}G(\O)/G(\O)$, where the union is over dominant coweights in dominance order.  The subscheme $\Gr_\l$ is not in general smooth, but the dense open orbit $G(\O)t^{-\l}G(\O)/G(\O)$ is smooth, being a (finite-dimensional) affine bundle over $G/P_J$, where $J=\{i \in S; \<\l, \a_i\>=0\}$ (see for example \cite[Section 2]{MV}).

\subsection{}
For general facts concerning equivariant $K$-theory, we refer the reader to \cite{CG}.  We now fix $W, J, \l$, and $y \in W$, $x,w \in W^J$.  The projected Richardson varieties $\Pi^x_y \subset G/P$ are labeled by $(x,y) \in Q_J$.  We denote by $[\co_{\Pi^x_y}] \in K_T(G/B)$ the torus-equivariant cohomology class, and by $[\co_{\Pi^x_y}]|_w \in K_T(\pt)$ the equivariant localization at a fixed point.  (Note that the $K$-cohomology and $K$-homology groups of $G/P$ are isomorphic, and we prefer to consider $[\co_{\Pi^x_y}]$ a class in $K$-cohomology.)  Write $\pi: G/B \to G/P$ for the natural projection.

\begin{prop}\label{P:PR}
We have
\begin{equation}\label{E1:6}
[\co_{\Pi^x_y}]|_w = \sum_{v \in W_J} e_{y,wv^{-1}} (wv^{-1}w_S e_{w_Sx^{-1},w_Svw^{-1}}) (wv^{-1} w_J e_{w_J,w_J} ) \i.
\end{equation}
\end{prop}
\begin{proof}
Recall that $X_y \subset G/B$ (resp. $X^x \subset G/B$) denotes the Schubert (resp. opposite Schubert) varieties.  Let $X^x_y = X^x \cap X_y \subset G/B$ be the Richardson variety.  Then in $K_T(G/B)$ we have the equality
$$
[\co_{X_w}][\co_{X^u}] = [\co_{X_w^u}]
$$
which follows from \cite[Lemma 1]{Bri}.  Thus,
$$
[\co_{X^x_y}]|_{u} = [\co_{X_y}]|_u [\co_{X^x}]|_u = e_{y,u} w_S e_{w_Sx,w_Su}
$$
since $X^x$ is obtained from $X_{w_Sx}$ by the action of $w_S$.  

By \cite[Theorem 4.5]{KLS2}, we have that $\pi_*\co_{X^x_y} = \co_{\Pi^x_y}$ and $R^i\pi_*\co_{X^x_y} = 0$ for $i > 0$.  Applying the equivariant pushforward $\pi_*:K_T(G/B) \to K_T(G/P)$ to $[\co_{X^x_y}]$ gives
$$
[\co_{\Pi^x_y}]|_w = (\pi_*[\co_{X^x_y}])|_w = \sum_{u \in wW_J} [\co_{X^x_y}]|_{u} e(\nu_u) \i
$$
where $e(\nu_u)$ denotes the $K$-theoretic equivariant Euler class of the tangent space $\nu_u$ at $u \in G/B$ to the fiber $\pi^{-1}(w)$.  This is a $K$-theoretic analogue of the Atiyah-Bott localization formula in equivariant cohomology; see for example \cite[Chapter 5]{CG}.  We calculate that
$$
e(\nu_u) = \prod_{\alpha \in R^- \, : \, ur_\alpha \in w W_J} u (1-e^\alpha) = u w_J e_{w_J,w_J}
$$
by Lemma \ref{L:K1}(1).  Finally, we apply Lemma \ref{L:K1}(2) to get $$w_S e_{w_Sx,w_Su}=uw_S e_{w_Sx^{-1},w_Su^{-1}}.$$
\end{proof}

\begin{lem}\label{L1:q}
For any $x,w \in W^J$ and $y \in W$, we have
$$
e_{yt^{-\l}x^{-1},wt^{-\l}w^{-1}} = \sum_{\substack{u \in W_J, y' \in W\\ y=y' \ast u \i }} e_{y',w} (w e_{t^{-\l}w_Jw_S,t^{-\l}w_Jw_S})(wt^{-\l}w_Jw_S e_{w_Sw_Ju^{-1}x^{-1},w_Sw_Jw^{-1}}).
$$
\end{lem}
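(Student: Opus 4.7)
The plan is to mimic the proof of Lemma \ref{L:q}, replacing the single application of Lemma \ref{L:K}(4) (whose $K$-theoretic analogue is more complicated) by two iterated applications of Lemma \ref{L:K1}(3), guided by the length-additive factorization $wt^{-\l}w^{-1} = w\cdot(t^{-\l}w_Jw_S)\cdot(w_Sw_Jw^{-1})$ established in Section \ref{tr}. Concretely, I would first apply Lemma \ref{L:K1}(3) with $u=wt^{-\l}w_Jw_S$, $v=w_Sw_Jw^{-1}$ to split off the right factor, and then a second time to split off $w$ from $e_{\tilde a,wt^{-\l}w_Jw_S}$, yielding a triple sum
\[
e_{yt^{-\l}x^{-1},wt^{-\l}w^{-1}} = \sum e_{y',w}\,(w\cdot e_{\tilde c,t^{-\l}w_Jw_S})\,((wt^{-\l}w_Jw_S)\cdot e_{\tilde b,w_Sw_Jw^{-1}})
\]
indexed by triples $(y',\tilde c,\tilde b)$ with $y'\ast \tilde c\ast \tilde b = yt^{-\l}x^{-1}$, $y'\le w$, $\tilde c\le t^{-\l}w_Jw_S$, and $\tilde b\le w_Sw_Jw^{-1}$ (so in particular $y',\tilde b\in W$).

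The first key reduction is to show that only $\tilde c = t^{-\l}w_Jw_S$ contributes. By property \ref{tr}(1), for $a,b\in W$ and $z\in\etW$ the elements $a\ast z$ and $z\ast b$ lie in $WzW$, so $y'\ast\tilde c\ast\tilde b\in W\tilde cW$; combined with $y'\ast\tilde c\ast\tilde b = yt^{-\l}x^{-1}\in Wt^{-\l}W$ this forces $\tilde c\in Wt^{-\l}W$. Since $t^{-\l}w_Jw_S$ is the minimum of the double coset $Wt^{-\l}W$ and $\tilde c\le t^{-\l}w_Jw_S$, we must have $\tilde c = t^{-\l}w_Jw_S$, and the middle factor collapses to $w\cdot e_{t^{-\l}w_Jw_S,t^{-\l}w_Jw_S}$.

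The second step is to parametrize the surviving $\tilde b$'s. Since $t^{-\l}w_Jw_S\in{}^S\etW^S$, the product $(t^{-\l}w_Jw_S)\cdot\tilde b$ is length-additive for every $\tilde b\in W$, so $t^{-\l}w_Jw_S\ast\tilde b = t^{-\l}(w_Jw_S\tilde b)$. Putting this element into the canonical form $\tilde y\,t^{-\l}(x')^{-1}$ with $x'\in W^J$ and using the length-additive decomposition $\tilde y\,t^{-\l}(x')^{-1} = \tilde y\cdot t^{-\l}w_Jw_S\cdot w_Sw_J(x')^{-1}$ together with associativity of $\ast$, one computes $y'\ast t^{-\l}w_Jw_S\ast\tilde b = (y'\ast \tilde y_J)\,t^{-\l}(x')^{-1}$ where $\tilde y_J\in W_J$. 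Matching against $yt^{-\l}x^{-1}$ by uniqueness of the canonical form forces $x' = x$, $\tilde y_J = u^{-1}$ for some $u\in W_J$ with $y = y'\ast u^{-1}$, and $\tilde b = w_Sw_Ju^{-1}x^{-1}$. This is exactly the parametrization appearing on the right-hand side.

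The main obstacle is the bookkeeping in the second step: verifying the several length-additivities $(t^{-\l}w_Jw_S)\cdot\tilde b$, $\tilde y\cdot t^{-\l}w_Jw_S$, $\tilde y\,t^{-\l}w_Jw_S\cdot w_Sw_Jx^{-1}$ that are needed to move between the raw Demazure product and the canonical form, and then tracking that the correspondence $\tilde b\leftrightarrow u$ is bijective. These all reduce to the canonical length formula $\ell(yt^{-\l}x^{-1}) = \ell(t^{-\l})+\ell(y)-\ell(x)$ and the standard length identities for minimal double-coset representatives recalled in Section \ref{tr}, so the proof is essentially a careful translation of the cohomology argument into the Demazure-product language.
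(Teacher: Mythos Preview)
Your proposal is correct and is essentially the approach the paper indicates: the paper remarks that the full $K$-theoretic analogue of Lemma~\ref{L:K}(4) is more complicated and that this special case ``can be proved easily from Lemma~\ref{L:K1}(3)'', which is precisely your two iterated applications of Lemma~\ref{L:K1}(3) along the length-additive factorization $wt^{-\l}w^{-1}=w\cdot(t^{-\l}w_Jw_S)\cdot(w_Sw_Jw^{-1})$, followed by the double-coset argument forcing $\tilde c=t^{-\l}w_Jw_S$ and the reparametrization $\tilde b=w_Sw_Ju^{-1}x^{-1}$.
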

\begin{proof}
We have $\ell(w(t^{-\l}w_Jw_S)(w_Sw_Jw^{-1})) = \ell(w)+\ell(t^{-\l}w_Jw_S)+\ell(w_Sw_Jw^{-1})$.  By Lemma \ref{L:K1}(3), 
$$
e_{y t^{-\l} x\i, w t^{-\l} w\i}=\sum e_{v,w} (we_{v',t^{-\l}w_Jw_S})(wt^{-\l}w_Jw_S e_{v'', w_Sw_Jw^{-1}}),
$$ 
where the summation runs over $v \leq w$, $v' \leq t^{-\l}w_Jw_S$ and $v'' \leq w_Sw_Jw^{-1}$ such that $v \ast v' \ast v'' = yt^{-\l}x$.  But $t^{-\l}w_Jw_S$ is minimal in $Wt^{-\l}W$, and since $v,v'' \in W$, we must have $v' = t^{-\l}w_Jw_S$.  If $v'' = w_Sw_Ju^{-1}x^{-1}$, then $v \ast (t^{-\l}w_Jw_S) \ast v''$ lies in $Wt^{-\l}u^{-1}x^{-1}$.  It follows that we must have $u^{-1} \in W_J$, and $(t^{-\l}w_Jw_S) \ast v'' = u^{-1}t^{-\l}x^{-1}$.  The result follows.
\end{proof}

\subsection{}\label{ssec:H}
The affine Grassmannian $\Gr$ is weak homotopy-equivalent to the based loop group $\Omega K$, where $K \subset G$ is a maximal compact subgroup.  The affine flag variety $\Fl$ is weak homotopy-equivalent to the quotient $LK/T_\R$ of the (unbased) loop group by the compact torus.  The torus-equivariant composition $\Omega K \to LK \to LK/T_\R$ induces a pullback map $r^*:K_T(\Fl) \to K_T(\Gr)$, and we refer the reader to \cite[Section 5]{HHH} and \cite{LSS} for a discussion of this.  The $T$-fixed points of $\Gr$ are labeled by the cosets of $\etW/W$.  Thus the pullback map can be described (\cite[Lemma 4.6]{LSS}) in terms of equivariant localizations by the formula
\begin{equation}
\label{eq:pullback}
r^*(\psi)(t^{-\l}W) = \psi(t^{-\l})
\end{equation} 
for $\psi \in K_T(\Fl)$ and a dominant coweight $\l$.  For our purposes, we can take this formula to be the definition of $r^*$; indeed, it is checked algebraically in \cite{LSS} that \eqref{eq:pullback} gives a well-defined map on the equivariant $K$-theories.  

Let $p': G/P \to G(\O)t^{-\l}G(\O)/G(\O)$ denote the zero-section of the affine bundle $G(\O)t^{-\l}G(\O)/G(\O) \to G/P$, and let $p: G/P \to \Gr_\l$ be the composition of $p'$ with the open inclusion $G(\O)t^{-\l}G(\O)/G(\O) \subset \Gr_\l$.  The map $p$ (and also $p'$) is a (torus-equivariant) closed embedding, and thus a proper map.  

We have a pushforward map $p_*:K_T(G/P) \to K_T(\Gr_\l)$, defined as follows.  Let $K^T(G/P)$ and $K^T(\Gr_\l)$ denote the Grothendieck groups of $T$-equivariant coherent sheaves on $G/P$ and $\Gr_\l$ respectively, as defined in \cite[Chapter 5]{CG}.  We have already remarked that $K^T(G/P) \simeq K_T(G/P)$.  We also have $K^T(\Gr_\l) \simeq K_T(\Gr_\l)$ by \cite[Proposition 5.5.6]{CG} applied to the Schubert stratification of $\Gr_\l$ (noting that the constructions of Kostant and Kumar \cite{KKK} are carried out in topological equivariant $K$-theory).
%
The pushforward map $p_*:K^T(G/P) \to K^T(\Gr_\l)$ is defined on the level of coherent sheaves by $p_*[\cf] = \sum_i (-1)^i[R^ip_*\cf]$, whenever $p$ is a proper map.  In fact $R^ip_* = 0$ for $i > 0$ since $p$ is a closed embedding in our situation.  We use the same notation $p_*$ to denote the map $K_T(G/P) \to K_T(\Gr_\l)$ obtained by composing with the isomorphisms $K^T(G/P) \simeq K_T(G/P)$ and $K^T(\Gr_\l) \simeq K_T(\Gr_\l)$.


Let $q^*: K_T(\Fl) \to K_T(\Gr_\l)$ be the composition of $r^*$ with the restriction $K_T(\Gr) \to K_T(\Gr_\l)$.  The following theorem generalizes \cite[Theorem 12.8]{KLS} in two ways: from the Grassmannian to all partial flag varieties $G/P$, and from cohomology to $K$-theory.

\begin{thm}\label{T:Kmain}
We have $p_*([\co_{\Pi^x_y}]) = q^*(\psi^{yt^{-\l}x^{-1}})$.
\end{thm}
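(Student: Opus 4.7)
The plan is to mirror the proof of Theorem \ref{T:Cmain} step by step, using the $K$-theoretic ingredients already developed: Lemmas \ref{L:K1}, \ref{L1:q}, and \ref{L1:fin} are precisely the counterparts of Lemmas \ref{L:K}, \ref{L:q}, and \ref{L:fin} that drove the cohomological argument. The support reduction carries over unchanged: $e_{yt^{-\l}x^{-1}, t^{\mu}}$ vanishes unless the dominant Weyl-translate $\mu'$ of $\mu$ satisfies $\mu' \geq \l$, so $q^*(\psi^{yt^{-\l}x^{-1}})$ is supported on $T$-fixed points of the form $wt^{-\l}w^{-1}$ in the open orbit $G(\O)t^{-\l}G(\O)/G(\O)$, and $p_*([\co_{\Pi^x_y}])$ is supported on the zero section $p(G/P)$; thus it suffices to compare localizations at these fixed points.

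The first substantive step is to derive the $K$-theoretic analogue of Proposition \ref{P:PR}. The factorization $[\co_{X^x_y}] = [\co_{X_y}] \cdot [\co_{X^x}]$ holds because $X^x_y$ is the proper, Cohen-Macaulay scheme-theoretic intersection of $X_y$ and $X^x$, and the identity $\pi_*[\co_{X^x_y}] = [\co_{\Pi^x_y}]$ follows from the rational-singularities and connected-fibers results of \cite{KLS2}. Applying $K$-theoretic equivariant pushforward along $\pi: G/B \to G/P$, with the denominator at the $T$-fixed point $u = wv^{-1}$ in the fiber $P/B$ computed via Lemma \ref{L:K1}(1) inside $W_J$, and Lemma \ref{L:K1}(2) used to rewrite $w_S e_{w_S x, w_S u}$, gives
\begin{equation*}
[\co_{\Pi^x_y}]|_w = \sum_{v \in W_J} \frac{e_{y, wv^{-1}}\,\bigl(wv^{-1} w_S\, e_{w_S x^{-1}, w_S v w^{-1}}\bigr)}{wv^{-1} w_J\, e_{w_J, w_J}}.
\end{equation*}

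Next, I would expand $e_{y, wv^{-1}}$ and $e_{w_S x^{-1}, w_S v w^{-1}}$ using Lemma \ref{L:K1}(3); the cohomological length-additivity $\ell(yu)=\ell(y)-\ell(u)$ is replaced here by a Demazure-product condition $y = y' \ast u^{-1}$, and analogously for the second factor. Substituting these expansions into the displayed formula and summing over $v \in W_J$, Lemma \ref{L1:fin} applied to $W_J$ collapses the inner sum to a Kronecker delta: its extra summation variable $u''$, absent from Lemma \ref{L:fin}, is precisely the combinatorial device that absorbs the Demazure-product slack. This yields
\begin{equation*}
[\co_{\Pi^x_y}]|_w = \sum_{\substack{u \in W_J \\ y = y' \ast u^{-1}}} e_{y', w}\,\bigl(w w_J w_S\, e_{w_S w_J u^{-1} x^{-1}, w_S w_J w^{-1}}\bigr).
\end{equation*}

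The final step is the normal-bundle computation. The $T$-invariant curves inside $\Gr_\l$ joining $wt^{-\l}w^{-1}$ to $T$-fixed points outside the zero section produce a $K$-theoretic Euler class $w\,e_{t^{-\l} w_J w_S, t^{-\l} w_J w_S}$ by the same calculation as in the cohomology proof. Multiplying $[\co_{\Pi^x_y}]|_w$ by this Euler class, and noting that $t^{-\l}$ acts trivially on $K_T^*(\pt)$ so that $wt^{-\l} w_J w_S$ and $ww_J w_S$ agree as operators, reproduces exactly the right-hand side of Lemma \ref{L1:q}, completing the proof. The principal obstacle is the Demazure-product bookkeeping in the third step: verifying that the outer summation range after the Lemma \ref{L1:fin} collapse matches exactly the range appearing in Lemma \ref{L1:q} requires careful combinatorics, but invokes no new input beyond the structural properties of $\ast$ recalled in Section \ref{tr}.
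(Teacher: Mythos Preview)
Your proposal is correct and follows essentially the same approach as the paper's proof, which likewise derives the $K$-theoretic analogue of Proposition~\ref{P:PR} via Brion's multiplicativity, expands both localization factors using Lemma~\ref{L:K1}(3), collapses with Lemma~\ref{L1:fin}, and finishes with the Euler-class computation and Lemma~\ref{L1:q}. The paper spells out the Demazure-product bookkeeping you flag as the principal obstacle: the key point is that for the second expansion the condition $(u''w_J)^\st \ast (w_Sw_Ju'^{-1}x^{-1}) = w_Sx^{-1}$ reduces, because $w_Sx^{-1}$ is maximal in its $W_{J^\st}$-coset, to $u'' \le u'$ in $W_J$, which is exactly what makes the summation range match Lemma~\ref{L1:fin}.
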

\begin{proof}
We have $yt^{-\l}x^{-1} \le t^{\mu}$ only if $ \max(W yt^{-\l}x^{-1} W) \le \max(W t^{\mu} W)$, only if 
$\mu' \ge \l$, where $\mu'$ is the dominant coweight in the $W$-orbit of $\mu$.  Thus $q^*(\psi^{yt^{-\l}x^{-1}})$ is non-zero only on $T$- fixed points of the form $w t^{-\l} w^{-1} \in G(\O)t^{-\l}G(\O)/G(\O)\subset \Gr_\l$.  Since $p(G/P) \subset G(\O)t^{-\l}G(\O)/G(\O)$, the class $p_*([\co_{\Pi^x_y}])$ is also supported on the same $T$-fixed points.  A class in $K_T(\Gr_\l)$ is determined by its pullbacks to all the $T$-fixed points, and so it suffices to compare the two sides at each of the $T$-fixed points $w t^{-\l} w^{-1}$.


By Lemma \ref{L:K1}(3), we have
\begin{equation}\label{E1:3}
e_{y,wv^{-1}} =\sum_{\substack{u \leq v, y' \in W \\ y=y' \ast u \i}} e_{y',w}(w e_{u^{-1},v^{-1}}).
\end{equation}
and
\begin{align*}
e_{w_S x^{-1}, w_Svw^{-1}} &= e_{w_S x^{-1}, (vw_J)^\st w_S w_J w^{-1}} = \sum e_{(u''w_J)^\st,(vw_J)^\st}\left((vw_J)^\st e_{w_Sw_J u'^{-1}x^{-1},w_S w_J w^{-1}}\right),
\end{align*} where the summation runs over all $u', u'' \in W$ such that $(u'' w_J)^\st \ast (w_S w_J u'^{-1} x \i)=w_S x \i$. By definition, $w_S w_J u'^{-1} x\i \in W_{J^\st} w_S x \i=w_S W_J x \i$ and $u' \in W_J$. Then $w_S w_J u'^{-1} x\i=(w_J u'^{-1})^\st w_S x \i$, here $(w_J u'^{-1})^\st \in W_{J^\st}$ and $w_S x \i$ is the maximal element in $W_{J^\st} w_S x \i$. Hence $(u'' w_J)^\st \ast (w_S w_J u'^{-1} x \i)=w_S x \i$ if and only if $(u'' w_J)^\st \ge ((w_J u'^{-1})^\st) \i$, i.e., $u'' w_J \ge u' w_J$. This is equivalent to say that $u'' \le u'$. Hence 
\begin{equation}\label{E1:4}
e_{w_S x^{-1}, w_Svw^{-1}}=\sum_{v \le u'' \le u' \text{ in } W_J} e_{(u''w_J)^\st,(vw_J)^\st}\left((vw_J)^\st e_{w_Sw_J u'^{-1}x^{-1},w_S w_J w^{-1}}\right)
\end{equation}

Thus applying Lemma \ref{L:K1}(2),
\begin{equation}
v \i w_S e_{(u''w_J)^\st,(vw_J)^\st}=w_J e_{w_J u''^{-1}, w_J v \i}=v \i w_J e_{w_J u'', w_J v}
\end{equation}
and 
\begin{equation}\label{E1:5}
wv^{-1}w_S e_{w_S x^{-1},w_Svw^{-1}}  = \sum_{v \le u'' \le u' \text{ in } W_J} (wv^{-1} w_J e_{w_J u'',w_J v}) (ww_Jw_S e_{w_Sw_J u'^{-1}x^{-1},w_S w_J w^{-1}}).
\end{equation}

Substituting \eqref{E1:3} and \eqref{E1:5} into \eqref{E1:6}, we have that 
$$
[\co_{\Pi^x_y}]|_w =\sum e_{y', w} (w e_{u \i, v \i})(wv^{-1} w_J e_{w_J u'',w_J v}) (wv^{-1} w_J e_{w_J,w_J} ) \i (ww_Jw_S e_{w_Sw_J u'^{-1}x^{-1},w_S w_J w^{-1}}),
$$ where the summation is over $u \le v \le u'' \le u' \text{ in } W_J$ and $y' \in W$ such that $y=y' \ast u \i$.

Applying Lemma \ref{L1:fin} to
$$
\sum_{\substack{v, u'' \in W_J \\ u \leq v \leq u'' \leq u'}}(w e_{u^{-1},v^{-1}}) (wv^{-1} w_J e_{w_J u'', w_J v}) (wv^{-1} w_J e_{w_J,w_J})^{-1}
$$
gives
$$
[\co_{\Pi^x_y}]|_w = \sum_{\substack{u \in W_J, y' \in W\\ y=y' \ast u \i }} e_{y',w}(ww_Jw_S e_{w_Sw_Ju^{-1}x^{-1},w_Sw_Jw^{-1}}).
$$

To compute $p_*([\co_{\Pi^x_y}])|_{wt^{-\l}w^{-1}}$ we apply the formula \cite[Theorem 5.11.7]{CG} for the localization of a pushforward, in the form given in \cite[Equation (7)]{FS}.  Since $\Gr_\l$ is not smooth, to apply these results, we must first restrict to the smooth open subset $G(\O)t^{-\l}G(\O)/G(\O)$.  Let $j: G(\O)t^{-\l}G(\O)/G(\O) \hookrightarrow \Gr_\l$ denote the open inclusion.  Then for any class $\psi \in K_T(\Gr_\l)$, we have $\psi|_{wt^{-\l}w^{-1}} = (j^*(\psi))|_{wt^{-\l}w^{-1}}$ by composing pullbacks.  It thus suffices to calculate $(j^*(p_*([\co_{\Pi^x_y}]))|_{wt^{-\l}w^{-1}}$.  Since $p$ is a closed embedding, and $j$ is an open embedding, we have $j^*(p_*([\co_{\Pi^x_y}])) = p'_*([\co_{\Pi^x_y}])$.  Now, $p': G/P \to G(\O)t^{-\l}G(\O)/G(\O)$ is a torus-equivariant closed embedding of smooth varieties, and we apply \cite[Equation (7)]{FS}, as follows.

We have that $p'_*([\co_{\Pi^x_y}])|_{wt^{-\l}w^{-1}} = e(\nu_w)[\co_{\Pi^x_y}]|_w$, where $e(\nu_w)$ is a product of $(1-e^\beta)$ over the weights $\beta$ of the normal space
to $G/P \subset G(\O)t^{-\l}G(\O)/G(\O)$ at the point $wt^{-\l}w^{-1}$.  The factor $e(\nu_{w})$ is equal to the product of the weights of the $T$-invariant curves joining $wt^{-\l}w^{-1}$ to $T$-fixed points $z$ inside $\Gr_\l$ which are outside of $G/P$.  For $w = w_S w_J$, these are all $T$-fixed points of the form $z = r_\alpha t^{-\l w_J w_S} < t^{-\l} w_J w_S$.  The product of the $T$-weights is thus $e(\nu_{1})=e_{t^{-\l}w_Jw_S, t^{-\l}w_Jw_S}$.  A similar calculation gives $e(\nu_{w}) = we_{t^{-\l}w_Jw_S, t^{-\l}w_Jw_S}$.

Combining with Lemma \ref{L1:q}, we get
$$
p_*([\co_{\Pi^x_y}])|_{wt^{-\l}w^{-1}} = e(\nu_w)[\co_{\Pi^x_y}]|_w = q^*(\psi^{yt^{-\l}x^{-1}})|_{wt^{-\l}w^{-1}}.
$$
\end{proof}

\subsection{}
As claimed in the introduction, the analogue of Theorem \ref{T:Kmain} holds in equivariant cohomology, either by a similar but easier proof, or by looking at the ``lowest degree terms'' of the equivariant localizations.

\begin{thm}\label{T:Cmain}
We have $p_*([\Pi^x_y]) = q^*(\xi^{yt^{-\l}x^{-1}})$, where $[\Pi^x_y] \in H^*_T(G/P)$ denotes the equivariant cohomology class of a projected Richardson variety, and $\xi^{yt^{-\l}x^{-1}} \in H^*_T(\Fl)$ are the torus-equivariant cohomlogy classes of Schubert varieties, constructed by Kostant and Kumar \cite{KK}.
\end{thm}

\subsection{}\label{ssec:Symm}
For background material on the symmetric function notation used in this section, we refer the reader to \cite{Buch,LSS}.  The general strategy of this section is similar to \cite[Section 7]{KLS}.

In this section we let $G = PGL(n,\C)$ and $G/P$ be the Grassmannian $\Gr(k,n)$ of $k$-planes in $\C^n$.  In \cite{Buch}, Buch defined {\it stable Grothendieck polynomials} $G_\la(X) \in \hat \Lambda$ for each partition $\la$, lying in the graded completion $\hat\Lambda$ of the ring of symmetric functions.  Buch showed that the $K$-theory $K(\Gr(k,n))$ of the Grassmannian could be presented as $\Gamma/I_{k,n}$, where $\Gamma = \prod_\la \Z \cdot G_\la(X)$, and $I_{k,n}$ is the ideal spanned (as a direct product) by all $G_\la$ where $\la$ is not contained in a $k \times (n-k)$ rectangle.  (Buch considered the direct sum rather than product of the $\Z \cdot G_\la(X)$, but the quotient is the same.)

In \cite{Lam}, symmetric functions $\tG_w(X)$ called {\it affine stable Grothendieck polynomials} were defined for each element $w \in\etW$ of the affine Weyl group (in this case, the affine symmetric group).  Let $\Lambda^{(n)}$ be the quotient of the ring of symmetric functions by the ideal generated by all monomial symmetric functions $m_\la$, for $\la_1 \geq n$.  Let $\hat \Lambda^{(n)}$ be the graded completion of $\Lambda^{(n)}$.
Let $r^*:K(\Fl) \to K(\Gr)$ denote the pullback map in $K$-theory, as in Subsection \ref{ssec:H}.
In \cite{LSS}, it was shown that $K(\Gr) \simeq \hat \Lambda^{(n)}$ and that under this isomorphism one has
\begin{equation}
\label{E:LSS}
r^*(\psi^w) = \tG_w.
\end{equation}

The following result was conjectured in \cite[Conjecture 7.11]{KLS}.  

\begin{thm}
Let $\omega_k$ denote the $k$-th fundamental coweight. Under the isomorphism $\kappa: K(\Gr(k,n)) \simeq \Gamma/I_{k,n}$, we have 
$$
\kappa([\co_{\Pi^x_y}]) = \tG_{yt^{-\omega_k}x^{-1}}
$$
where the right hand side is considered as an element of the quotient $\Gamma/I_{k,n}$.
\end{thm}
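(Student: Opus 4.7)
The plan is to derive the theorem from a non-equivariant specialization of Theorem~\ref{T:Kmain}, applied to $G = PGL(n,\C)$, $G/P = \Gr(k,n)$, and $\l = \omega_k$. First I would exploit that $\omega_k$ is a minuscule coweight in type A: the only dominant $\mu$ with $\mu \leq \omega_k$ is $\omega_k$ itself, so $\Gr_{\omega_k}$ consists of the single $G(\O)$-orbit $G(\O)t^{-\omega_k}G(\O)/G(\O)$. The stabilizer of $t^{-\omega_k}G(\O) \in \Gr$ inside $G(\O)$ is the preimage of the cominuscule parabolic $P$ under the evaluation $G(\O) \to G$, so $\Gr_{\omega_k} \simeq G/P = \Gr(k,n)$ and the zero-section $p : G/P \to \Gr_{\omega_k}$ becomes an isomorphism. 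Using $q^* = \iota^* \circ r^*$ together with $r^*(\psi^w) = \tG_w$ from \eqref{E:LSS}, the non-equivariant version of Theorem~\ref{T:Kmain} reduces to
\[
[\co_{\Pi^x_y}] = \iota^*\bigl(\tG_{yt^{-\omega_k}x^{-1}}\bigr) \quad \text{in } K^*(\Gr(k,n)),
\]
where $\iota : \Gr_{\omega_k} \hookrightarrow \Gr$ is the closed embedding.

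Next I would identify the pullback $\iota^*$ with an algebraic reduction $\hat\Lambda^{(n)} \to \Gamma/I_{k,n}$. Each affine stable Grothendieck polynomial $\tG_w$ expands as a (possibly infinite) $\Z$-linear combination of the $G_\lambda$'s, so the image $\bar{\tG}_w \in \Gamma/I_{k,n}$ obtained by discarding all $G_\lambda$ with $\lambda \not\subset (n-k)^k$ is well-defined. The key lemma I need is that, under the isomorphisms $K^*(\Gr) \simeq \hat\Lambda^{(n)}$ (sending the affine Schubert class of $w \in \etW^S$ to $\tG_w$) and $\kappa \colon K^*(\Gr(k,n)) \simeq \Gamma/I_{k,n}$, the map $\iota^*$ corresponds precisely to this reduction, i.e.\ $\kappa(\iota^*(\tG_w)) = \bar{\tG}_w$. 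Granted the lemma, applying $\kappa$ to the displayed identity produces the theorem.

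The hard part will be proving the key lemma. My approach would be to check compatibility on the affine Schubert basis, using that $\Gr_{\omega_k}$ is itself an affine Schubert variety in $\Gr$. The pullback $\iota^*$ then sends an affine Schubert class $\tG_w$ to a Schubert class of $\Gr(k,n)$ exactly when the corresponding affine Schubert subvariety is contained in $\Gr_{\omega_k}$, and to zero otherwise; combinatorially, this non-vanishing condition is that the indexing partition fits inside the $k \times (n-k)$ rectangle, which is precisely the condition $G_\lambda \notin I_{k,n}$. To make the argument rigorous I would lift to the $T$-equivariant setting and compare equivariant localizations at the common $T$-fixed points $wt^{-\omega_k}$ of $\Gr_{\omega_k}$ and $\Gr(k,n)$, invoking the Kostant--Kumar machinery of Section~\ref{sec:K} to pin down the Schubert expansions on both sides.
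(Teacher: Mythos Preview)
Your reduction is exactly the paper's: minusculeness of $\omega_k$ makes $p$ an isomorphism, so Theorem~\ref{T:Kmain} together with \eqref{E:LSS} gives $[\co_{\Pi^x_y}] = \iota^*(\tG_{yt^{-\omega_k}x^{-1}})$ in $K^*(\Gr(k,n))$, and the theorem follows once one knows that $\kappa\circ\iota^*:\hat\Lambda^{(n)}\to\Gamma/I_{k,n}$ is the natural quotient.  The divergence is in how you propose to prove this key lemma.

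Your plan is to verify the lemma on the full affine Schubert basis $\{\tG_w : w\in\etW^S\}$ by computing $\iota^*(\tG_w)$ geometrically (a finite Schubert class or zero) and then comparing with $\bar\tG_w$.  The second half of this comparison is where the gap lies: to conclude you would need to know, independently, that $\tG_w\equiv G_{\lambda(w)}\pmod{I_{k,n}}$ for the partition $\lambda(w)$ attached to $w$, and that $\tG_w\in I_{k,n}$ in the remaining cases.  These are nontrivial symmetric-function identities, essentially equivalent in strength to the theorem you are trying to prove, and your localization argument does not supply them.  Computing the localizations of $\iota^*(\tG_w)$ at the fixed points of $\Gr_{\omega_k}$ tells you what class you have in $K^*(\Gr(k,n))$, but says nothing about the $G_\lambda$-expansion of $\tG_w$ in $\Gamma$.

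The paper sidesteps this by exploiting that $\kappa\circ\iota^*$ and the natural quotient are both \emph{ring} homomorphisms, so agreement on the finitely many topological ring generators $G_{(m)}=\tG_{s_{m-1}\cdots s_0}$ ($1\le m<n$) suffices.  For these one has simultaneously the affine description $G_{(m)}=r^*(\psi^{s_{m-1}\cdots s_0})$ and, after shifting by a length-zero element, the form $yt^{-\omega_k}x^{-1}$ with $\Pi^x_y$ an ordinary opposite Schubert variety in $\Gr(k,n)$; Buch's theorem then identifies $\kappa$ of that class with $G_{(m)}$ (or $0$ when $m>n-k$), which is visibly also the natural quotient of $G_{(m)}$.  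This avoids any need to understand $\bar\tG_w$ for general $w$.  If you want to salvage your approach, the cleanest fix is to restrict your basis check to these ring generators rather than to all of $\etW^S$.
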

\begin{proof}
When $\la$ is the fundamental coweight $\omega_k$ we have $G/P \simeq \Gr_\la \subset \Gr$ (see \cite[Section 7]{KLS}).
Combining (the non-equivariant image of) Theorem \ref{T:Kmain} with \eqref{E:LSS}, it thus remains to check that the inclusion $\iota: G/P \hookrightarrow \Gr$ induces the natural quotient map $\hat \Lambda^{(n)} \to \Gamma/I_{k,n}$.  

The ring $\hat \Lambda^{(n)}$ contains distinguished symmetric functions $G_{(m)}(X) = \tG_{s_{m-1}s_{m-2}\cdots s_0}(X)$ for $1 \leq m < n$.  The completion of the subring generated by the $G_{(m)}(X)$ is exactly $\hat \Lambda^{(n)}$.  The ring homomorphism $\iota^*:K(\Gr) \to K(\Gr(k,n))$ is compatible with graded completions, and is thus determined by the images of $G_{(m)}(X)$.  Now, in $K^*(\Gr)$, $\tG_{s_{m-1}s_{m-2}\cdots s_0}(X)$ represents the pullback $r^*(\psi^{s_{m-1}s_{m-2}\cdots s_0})$.  For $m \leq n-k$, modulo length-zero elements of $\etW$ (one has $\tG_v(X) = \tG_u(X)$ if $v$ and $u$ differ by a length-zero element), $s_{m-1}s_{m-2}\cdots s_0$ is the same as $s_{k+m-1} \cdots s_{k+1} s_k t^{-\omega_k}w_J w_S$.  But under Buch's isomorphism $K(\Gr(k,n)) \simeq \Gamma/I_{k,n}$, the opposite Schubert variety $\pi(X_{s_{k+m-1} \cdots s_{k+1} s_k}) = \Pi^{w_Jw_S}_{s_{k+m-1} \cdots s_{k+1} s_k}$ is represented by the symmetric function $G_{(m)}(X)$ as well \cite[Theorem 8.1]{Buch}.  Similarly, if $m > n-k$ one sees that $\iota^*$ sends $G_{(m)}(X)$ to 0.  Thus $\iota^*$ induces the natural map $\hat \Lambda^{(n)} \to \Gamma/I_{k,n}$.
\end{proof}

\subsection{}\label{ssec:QK}
In \cite{LSQH}, Lam and Shimozono, following work of Peterson, showed that the quantum cohomology rings $QH^*(G/P)$ of partial flag varieties could, after localization, be identified with a quotient of the homology $H_*(\Gr)$ of the affine Grassmannians.  In particular, the 3-point Gromov-Witten invariants of $G/P$ could be recovered from the homology Schubert structure constants of $H_*(\Gr)$.  

Let $G/P$ be a cominuscule flag variety.  In this section, we discuss the implications of Theorem \ref{T:Kmain} towards the comparison of the quantum $K$-theory $QK(G/P)$ of $G/P$ and $K$-homology $K_0(\Gr)$ of the affine Grassmannian.  We will work in the non-equivariant setting; the $T$-equivariant statements are analogous.  We now define four sets of integers.

\begin{enumerate}
\item
For $u,v,w \in \etW/W$, let $d^w_{uv} \in \Z$ denote the $K$-homology Schubert structure constants of $K_0(\Gr)$, defined in \cite[(5.3)]{LSS} (we will only consider the non-equivariant structure constants).  We remark that in \cite{LSS} only the affine Grassmannian $\Gr$ of a simply-connected simple algebraic group is considered, but the extension is straightforward; see for example \cite{LSQT}.
\item
For $u \in \etW$, and $y \in \etW^S$ a minimal length coset representative of $\etW/W$ we can consider the coefficient $k_y^u$ of the $K$-cohomology Schubert class $\psi_{\Gr}^y$ in $r^*(\psi_{\Fl}^u)$, where $r^*:K(\Fl) \to K(\Gr)$ denotes the pullback map in $K$-theory, as in Subsection \ref{ssec:H}.
\item
For a positroid variety $\Pi^u_v$ and $y \in W^J$, consider the coefficient $\pi^y_{(u,v)}$ of the (class of the) Schubert structure sheaf $[\co_{X_y}]$ in $[\co_{\Pi^u_v}] \in K(G/P)$.
\item
For $u,v,w \in W^J$, consider the $K$-theoretic Gromov-Witten invariant $I_d(u,v,w) = I_d(\co_{X_u},\co_{X^v},(\co_{X_w})^\vee)$; see for example \cite[Section 5]{BCMP}.  Here $\{[(\co_{X_w})^\vee]\}$ is the dual basis to $\{[\co_{X_w}]\}$ in $K(G/P)$.  The $K$-theoretic Gromov-Witten invariant is defined as the Euler characteristic of the product of the pullbacks of these structure sheaves to the moduli space $M_{d,3}(G/P)$ of three-point, genus zero, stable maps into $G/P$ with degree $d$.
\end{enumerate}

We now compare the four sets of integers.
\begin{enumerate}
\item
By \cite[(5.1), (5.4) and Theorem 5.4]{LSS}, 
$$
d_{uv}^w = \sum_{\substack{x \in \etW \\ x*v = w}} (-1)^{\ell(w) -\ell(v)-\ell(x)}k^x_u.
$$
Thus the $k^x_u$ determine the $d_{uv}^w$, and it is easy to see that (picking $v$ and $u$ appropriately) the $d_{uv}^w$ also determine the $k^x_u$.
\item
By the cominuscule assumption we have $G/P \simeq \Gr_\la \subset \Gr$, where $\la$ is the appropriate minuscule coweight.    Thus $p_*$ can be identified with the identity, and Theorem \ref{T:Kmain} states that $q^*(\psi^{yt^{-\la}x^{-1}}) =[\co_{\Pi^x_y}]$.  Now suppose that $x = w_Sw_J$ and $y \in W^J$.  Then $\Pi^x_y = \pi(X_y)$ is a usual Schubert variety in $G/P$, and since $yt^{-\la} w_Jw_S \in \etW^S$, we have $r^*(\psi^y_{\Fl}) = (\psi^y_{\Gr})$.  It follows that the coefficient $\pi^y_{(u,v)}$ is equal to $k_y^{vt^{-\la}u^{-1}}$. 
\item
In \cite{BCMP}, Buch, Chaput, Mihalcea, and Perrin studied the geometry of the Gromov-Witten varieties associated to cominuscule $G/P$.  An unpublished\footnote{Since this paper was written, a preprint with related results has appeared as \emph{Projected Gromov-Witten varieties in cominuscule spaces},
A. S. Buch, P.-E. Chaput, L. C. Mihalcea, and N. Perrin, \texttt{arXiv:1312.2468}.} consequence of their work, communicated to us by L. Mihalcea, is that
\begin{equation}\label{E:BCMP}
I_d(u,v,w) \text{ is equal to the coefficient of  a Schubert structure sheaf $[\co_{X_w}]$ in } [\co_{\Pi^x_y}]
\end{equation}
where $\Pi^x_y$ is a projected Richardson variety which depends on $d,u,v$.  For an explicit description of $\Pi^x_y$ in type $A$ see \cite[Section 8]{KLS}.  For the classical types the explicit description can presumably be recovered from \cite{BKT}, and for other cominuscule types see \cite{CMP}.  Thus the coefficients $\pi^y_{(x,z)}$ determine all the coefficients $I_d(u,v,w)$.
\end{enumerate}

\begin{cor}
Let $G/P$ be cominuscule.  Assuming \eqref{E:BCMP}, the $K$-homology Schubert structure constants determine the 3-point $K$-theoretic Gromov-Witten invariants of $G/P$.
\end{cor}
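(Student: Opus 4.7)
The proof is a straightforward chaining of the three comparisons (1)--(3) already laid out in Subsection \ref{ssec:QK}; the plan is to trace the determination of the invariants backwards through that chain, culminating at the $K$-homology Schubert constants.

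First, I would start from the $K$-theoretic Gromov--Witten side. By \eqref{E:BCMP}, for any triple $u,v,w \in W^J$ and degree $d$ there is a projected Richardson variety $\Pi^x_y \subset G/P$ (depending on $d,u,v$ in an explicit way recovered from \cite{BCMP,KLS,BKT,CMP}) such that $I_d(u,v,w)$ equals the coefficient $\pi^w_{(x,y)}$ of $[\co_{X_w}]$ in $[\co_{\Pi^x_y}]$. So it suffices to prove that the $K$-homology Schubert structure constants $\{d_{uv}^w\}$ determine all coefficients $\pi^w_{(x,y)}$.

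Next, I would invoke the cominuscule hypothesis. In this case $G/P \simeq \Gr_\l \subset \Gr$ for the appropriate cominuscule coweight $\l$, so the map $p_*$ of Theorem \ref{T:Kmain} is the identity, and the theorem reads $[\co_{\Pi^x_y}] = q^*(\psi^{y t^{-\l} x^{-1}})$. Taking $x = w_S w_J$ and letting $w \in W^J$ vary, $\Pi^{w_S w_J}_w = \pi(X_w)$ is an ordinary Schubert variety in $G/P$, and because $w t^{-\l} w_J w_S \in \etW^S$ the pullback $r^*$ (which factors $q^*$) sends $\psi_{\Fl}^w$ to $\psi_{\Gr}^w$. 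Extracting coefficients on both sides of $[\co_{\Pi^x_y}] = q^*(\psi^{y t^{-\l} x^{-1}})$ in the Schubert basis then gives $\pi^w_{(x,y)} = k_w^{y t^{-\l} x^{-1}}$, so the $\{\pi^w_{(x,y)}\}$ are a specialization of the $\{k_y^u\}$.

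Finally, I would use the affine-Grassmannian identity \cite[(5.1), (5.4), Theorem 5.4]{LSS} recalled in item (1), namely
\[
d_{uv}^w = \sum_{\substack{x \in \etW \\ x \ast v = w}} (-1)^{\ell(w)-\ell(v)-\ell(x)}\, k_u^x,
\]
which, by a standard triangularity / Möbius inversion on the monoidal product (one may fix $v$ of minimal length in its coset and vary $w$) lets one conversely solve for each $k_u^x$ as an explicit integer combination of $d_{uv}^w$'s. Composing the three reductions $I_d(u,v,w) \leftrightarrow \pi^w_{(x,y)} \leftrightarrow k^u_y \leftrightarrow d_{uv}^w$ gives the corollary.

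The only substantive point is the middle step, where one must check that the specific affine Weyl element $y t^{-\l} x^{-1}$ really is a minimal coset representative in $\etW^S$ and that Theorem \ref{T:Kmain} is being applied with $p_*$ genuinely trivial; this is exactly where the cominuscule assumption is used, via the isomorphism $G/P \simeq \Gr_\l$ of \cite[Section 7]{KLS}. Everything else is bookkeeping with already-established identities, so I do not anticipate any serious obstacle.
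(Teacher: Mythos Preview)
Your proposal is correct and follows exactly the paper's approach: the corollary is stated there without proof, as an immediate consequence of chaining comparisons (1)--(3) of Subsection~\ref{ssec:QK}, which is precisely what you do. The only slip is in your closing paragraph, where it is $w t^{-\l} w_J w_S$ (for $w \in W^J$), not the general $y t^{-\l} x^{-1}$, that must lie in $\etW^S$; but you already had this right in the middle step.
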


\section*{Appendix}
\label{app}
Here we prove that the poset $(Q_J, \preceq)$ is combinatorially equivalent to the poset introduced by Rietsch in \cite[Section 5]{R} and by Goodearl and Yakimov in \cite[Theorem 1.8]{GY}.

Following \cite{R}, we set $$Q'_J=\{(a, b, c) \in W^J_{\max} \times W_J \times W^J; a \le c b\}$$ and define the partial order $\le$ on $Q'_J$ as follows.

For $(a, b, c), (a', b', c') \in Q'_J$, define $$(a', b', c') \le (a, b, c)$$ if there exists $u'_1, u'_2 \in W_J$ with $u'_1 u'_2=b'$, $\ell(u'_1)+\ell(u'_2)=\ell(b')$ and $$a b \i \le a' (u'_2) \i \le c' u'_1 \le c.$$

Following \cite{GY}, we set $$\Om_J=\{(a, b) \in W^J_{\max}  \times W; a \le b\}$$ and define the partial order $\le$ on $\Om_J$ as follows.

For $(a, b), (a', b') \in \Om_J$, define $$(a', b') \le (a, b)$$ if there exists $z \in W_J$ such that $a \le a' z$ and $b' z \le b$. 

\begin{prop}
The maps 
\begin{gather*} 
f: Q'_J \to \Om_J, \quad (a, b, c) \mapsto (a, c b) \\
g: \Om_J \to Q_J, \quad (a, b) \mapsto (\min(b W_J), a b \i \min(b W_J)) \\
h: Q_J \to Q'_J, \quad (x, y) \mapsto (\max(y W_J), y \i \max(y W_J), x)
\end{gather*}
give order-preserving bijections between the posets $(Q'_J, \le)$, $(\Om_J, \le)$ and $(Q_J, \preceq)$. 
\end{prop}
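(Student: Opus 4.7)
The plan is to prove the proposition in three stages: first verify each of $f$, $g$, $h$ is well-defined, then show the three compositions $h\circ g\circ f$, $f\circ h\circ g$, $g\circ f\circ h$ are identity maps (giving bijectivity), and finally verify each map is order-preserving. Since the three maps form a cycle whose compositions are identities, combining bijectivity with one-directional order-preservation for each of the three maps yields the desired poset isomorphism.

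For well-definedness, the case of $f$ is immediate from the defining inequality $a\le cb$ in $Q'_J$. For $g$, write $b=xw$ with $x=\min(bW_J)\in W^J$ and $w\in W_J$, so that $\ell(b)=\ell(x)+\ell(w)$; the image is $(x,aw^{-1})$ and the required $aw^{-1}\le x$ follows by applying property (5) of subsection 2.2 to the downward Demazure product: from $a\le xw$ one deduces $a\tril w^{-1}\le(xw)\tril w^{-1}$, and a length computation (using $a=a'w_J\in W^J w_J$) shows these are $aw^{-1}$ and $x$ respectively. Well-definedness of $h$ is similar, using the $\ast$ product in place of $\tril$. For bijectivity, each cycle composition can be verified by a direct unraveling: for instance, $(g\circ f)(a,b,c)=(c,ab^{-1})$ because $c\in W^J$ forces $\min(cbW_J)=c$, and then $(h\circ g\circ f)(a,b,c)=(a,b,c)$ because $a\in W^J_{\max}$ and $ab^{-1}\in aW_J$ force $\max(ab^{-1}W_J)=a$. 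The other two cycle compositions are analogous.

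For order-preservation, the three orders are parameterized by different data: a two-part factorization $b'=u'_1u'_2$ in $Q'_J$, a single $z\in W_J$ in $\Om_J$, and a single $u\in W_J$ in $\preceq$. For $f$, one must produce a single $z$ witnessing the $\Om_J$ relation from the factored data of the $Q'_J$ relation; the candidate is dictated by comparing decompositions of $b$ and $b'$ through $\min(\,\cdot\,W_J)$, and the verification reduces to property (4) of subsection 2.2 (monotonicity of $\ast$) combined with the identity $a\ast v=a$ for $a\in W^J_{\max}$, $v\in W_J$. For $g$ and $h$ the translation is in the opposite direction, where a single $W_J$-witness must be split into a factored pair; here the subword property of Bruhat order applied to the length-additive products $cb$ (with $c\in W^J$, $b\in W_J$) and $y'w_J$ is the key tool.

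The main obstacle will be order-preservation of $f$: right-multiplication is not monotone for Bruhat order in general, so each passage of the form ``$\xi_1\le\xi_2$ implies $\xi_1 v\le\xi_2 v$'' must be justified by verifying length-additivity. The chain $ab^{-1}\le a'(u'_2)^{-1}\le c'u'_1\le c$ mixes elements from different $W_J$-cosets, and extracting from it a single $z\in W_J$ satisfying both $a\le a'z$ and $c'b'z\le cb$ requires delicate bookkeeping with the Demazure products, in particular property (6) which equates the inequalities $z\le x\ast y$, $z\tril y^{-1}\le x$, and $x^{-1}\trir z\le y$, together with the observation that $a'\in W^J_{\max}$ implies every element of the coset $a'W_J$ is $\le a'$, so that any viable $z$ must in fact satisfy $a\le a'$ as a necessary consequence.
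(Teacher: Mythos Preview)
Your overall strategy---well-definedness, bijectivity via the three cyclic compositions being identities, then separate order-preservation for each of $f$, $g$, $h$---is exactly the paper's approach, and the tools you flag (properties (4) and (5) of Section~2.2, together with the identities $a\ast v=a$ and $a\tril v=av$ for $a\in W^J_{\max}$, $v\in W_J$) are the right ones. The one place your sketch is off is the construction of the witness $z$ for $f$: there is no ``comparison of decompositions through $\min(\cdot\,W_J)$'' (indeed $b,b'\in W_J$ already), nor is property (6) needed; the paper simply applies property (4) to $ab^{-1}\le a'(b')^{-1}u'_1$ to obtain $a=ab^{-1}\ast b\le a'(b')^{-1}u'_1\ast b$, extracts $v\le b$ with $a'(b')^{-1}u'_1\ast b=a'(b')^{-1}u'_1v$ via property (1), and sets $z=(b')^{-1}u'_1v$, after which $c'b'z=c'u'_1v\le cv\le cb$ follows from $c'u'_1\le c$, $v\le b$, and $c\in W^J$.
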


\begin{proof}
For $(a, b, c) \in Q'_J$, $f(a, b, c)=(a, c b)$, $g \circ f(a, b, c)=(c, a b \i)$ and $h \circ g \circ f(a, b, c)=(a, b, c)$. Similarly, $g \circ f \circ h$ is an identity map on $Q_J$ and $f \circ h \circ g$ is an identity map on $\Om_J$. Thus $f, g, h$ are all bijective. 

Now it suffices to show that $f, g, h$ preserve the partial orders. 

Let $(a, b, c), (a', b', c') \in Q'_J$ with $(a', b', c') \le (a, b, c)$. Then there exists $u'_1 \in W_J$ such that $a b \i \le a' (b') \i u'_1 \le c' u'_1 \le c$.  Thus $a = ab\i * b \leq a'(b') \i u'_1 * b$.  In other words, there exists $v \le b$ such that $a \le a'(b') \i u'_1v$.  Let $z = (b') \i u'_1v$.  Then $c'b'z = c' u_1'v \le cv \le cb$ since $c \in W^J$.  Hence $(a', c' b') \le (a, c b)$ in $\Om_J$. 

Let $(a, b), (a', b') \in \Om_J$ with $(a', b') \le (a, b)$. Then there exists $z \in W_J$ with $a \le a' z$ and $b' z \le b$. We assume that $g(a, b)=(x, y)$ and $g(a', b')=(x', y')$. Then there exists $u, u' \in W_J$ such that $x=b u$, $y=a u$, $x'=b' u'$ and $y'=a' u'$. Since $b' z \le b$, we have that $b' z \tril u \le b \tril u \le b u=x$. Hence there exists $v \le u$ such that $x' (u') \i z v=b' z v \le x$. 

Since $a \in W^J_{\max}$, $$y=a u \le a v=a \tril v \le a' z \tril v \le a' z v=y' (u') \i z v.$$ Thus $(x', y') \preceq (x, y)$ in $Q_J$. 

Now let $(x, y), (x', y') \in Q_J$ with $(x', y') \preceq (x, y)$. Then there exists $u \in W_J$ such that $x' u \le x$ and $y' u \ge y$. So we have that $y' \ast u \ge y' u \ge y$. In other words, there exists $v \le u$ with $y' \ast u=y' v$ and $\ell(y' v)=\ell(y')+\ell(v)$. Since $x' \in W^J$, we also have that $y' v \le x' v \le x' u \le x$. 

We assume that $h(x, y)=(a, b, x)$ and $h(x', y')=(a', b', x')$. Then $y=a b \i$ and $y'=a' (b') \i$. We have that $$a b \i=y \le y' v=a' (b') \i v \le x' v \le x.$$ Since $a' \in W^J_{\max}$, $\ell(a' (b') \i v)=\ell(a')-\ell((b') \i v)$ and $\ell(y' v)=\ell(y')+\ell(v)=\ell(a')-\ell(b')+\ell(v)$. So $\ell((b') \i v)+\ell(v)=\ell(b')$. Thus $(a', b', x') \le (a, b, x)$ in $Q'_J$.
\end{proof}

\end{document}